\newtheorem{thm}{Theorem}[section]
\newtheorem{prop}[thm]{Proposition}
\newtheorem{lem}[thm]{Lemma}
\newtheorem{df}[thm]{Definition}
\newtheorem{rmk}[thm]{Remark}
\newtheorem{Para}[thm]{}
\def\Hom{\mathop{\rm Hom}\nolimits}
\begin{document}

\title{\large \bf Quotients of exact categories by pseudo-cluster tilting subcategories
\thanks{{\it 2020 Mathematics Subject Classification}:18E10, 18E13}
\thanks{{\it Keywords}: quotients categories; abelian categories; conflations.}
}
\author{Jie Xu, Yuefei Zheng \footnote{Corresponding author} \\
\it\footnotesize College of Science, Northwest A$\&$F University, Yangling 712100, Shaanxi Province, China\\
\it\footnotesize Email addresses: xujiejie@nwafu.edu.cn; zhengyf@nwafu.edu.cn
}
\date{}
\baselineskip=15pt
\maketitle
\begin{abstract}
We introduce the concept of a pseudo-cluster tilting subcategory from the viewpoint of the fact that the quotient of an exact category by a cluster tilting subcategory is an abelian category. We prove that the quotients in the case of pseudo-cluster tilting are always semi-abelian. In addition, it is abelian if and only if some self-orthogonal conditions are satisfied. We revisit the abelian quotient category of conflations by splitting ones, and get that there exists a unique exact substructure such that it is a cluster quotient.
\end{abstract}
\section{\bf Introduction}
Let $\mathcal{T}$ be a triangulated category, $\bigtriangleup (\mathcal{T})$ the additive category consisting of the triangles. Neeman in his monograph \cite[Chapter 5.1]{N1} proved that the quotient of $\bigtriangleup (\mathcal{T})$ by modulo a homotopy-like relation must be an abelian category. The same is also true in the case of exact categories when we replace triangles with conflations \cite[Section 1]{N2}. In fact, the later
abelian structure was discussed by Lin in more detail \cite{L}. In homological algebra, it is known that an important way to produce abelian quotients is the theory of cluster tilting.

We roughly recall the cluster tilting theory from the above point. The notion of a cluster tilting object in a cluster category first appears in \cite{BMRRT}. The quotient of a cluster category by an additive closure of a cluster tilting object is isomorphic to the module category of the corresponding cluster tilted algebra, hence an abelian category, this is one of the main results in \cite{BMR}. Koenig and Zhu proved that this abelian quotient is available in any triangulated category with a cluster tilting subcategory \cite{KZ}. Under some mild conditions, Beligiannis proved that the abelian quotient can characterize cluster tilting subcategories \cite[Theorem F]{Be}. The cases of exact categories were also considered in \cite{DL,LZ}.

Let $(\mathcal{M},\mathcal{E})$ be an exact category with the exact structure $\mathcal{E}$. Then the category of conflations, denoted by $\mathcal{E(M)}$, is also an exact category with the exact structure computed degree-wise. We denote this exact structure over $\mathcal{M}$ by $(\mathcal{E(M)},\mathcal{E})$. Let $\mathcal{S}(\mathcal{M})$ be the class of splitting conflations. Our focus is that how to connect the abelian quotient $\mathcal{E}(\mathcal{M})/\mathcal{S}(\mathcal{M})$ with cluster tilting theory in $(\mathcal{E(M)},\mathcal{E})$.

Let us recall the definition of a cluster tilting subcategory in an exact category. An additive subcategory $\mathcal{P}$ of $(\mathcal{M},\mathcal{E})$ is a cluster tilting subcategory if (1) $\mathcal{P}$ is strongly functorially finite; (2) $\mathcal{P}$ is maximum self-orthogonal. It can be seen that this definition is equivalent to: ($1'$) for any $X\in \mathcal{M}$,
there are two conflations $0\to X \overset{\alpha }\to Q_0\to Q_1\to 0$ and $0\to P_1 \to P_0 \overset{\beta }\to X\to 0$ with $Q_0,\ Q_1,\ P_0$, and  $P_1\in \mathcal{P}$, and $\alpha $ (resp. $\beta $) is a $\mathcal{P}$-preenvelope (resp. $\mathcal{P}$-precover); ($2'$) $\mathcal{P}$ is self-orthogonal. We call $\mathcal{P}$ a $pseudo$-$cluster\ tilting\ subcategory$ if it satisfies ($1'$). An interesting phenomenon is that $\mathcal{S}(\mathcal{M})$ is always a pseudo-cluster tilting subcategory of $(\mathcal{E(M)},\mathcal{E})$, see Proposition \ref{S(M) pre}. For this reason, we consider the quotient of an exact category by a pseudo-cluster tilting subcategory, and get the following. Compare with \cite[Theorem F]{Be} and \cite[Theorem 1.6]{LZ}.

\begin{thm}\label{theorem 1.1}
Let $\mathcal{M}$ be an exact category and $\mathcal{P}$ a pseudo-cluster tilting subcategory.
\begin{enumerate}
\item[(1)] $\mathcal{M}$/$\mathcal{P}$ is a semi-abelian category;
\item[(2)] $\mathcal{M}$/$\mathcal{P}$ is an abelian category if and only if one of the following holds
\begin{enumerate}
\item[(i)] The subcategory $\mathcal{P}$ is self-orthogonal with respect to $\mathcal{S}$;
\item[(ii)] The subcategory $\mathcal{P}$ is self orthogonal with respect to $\mathcal{T}$.
\end{enumerate}
where $\mathcal{S}$ and $\mathcal{T}$ are two special classes of conflations, see Section \ref{section 3} for the detail.
\end{enumerate}
\end{thm}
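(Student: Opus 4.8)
The plan is to realize kernels and cokernels in the additive quotient $\mathcal{M}/\mathcal{P}$ explicitly from the conflations of condition $(1')$, conclude that $\mathcal{M}/\mathcal{P}$ is preabelian, and then analyze the canonical morphism from the coimage to the image. For the \textbf{cokernel} of a morphism $\bar f\colon \bar X\to\bar Y$, I would fix a $\mathcal P$-preenvelope conflation $0\to X\overset{\alpha}\to Q_0\to Q_1\to 0$ and form the pushout of $\alpha$ along $f$. Since the pushout of an inflation along any morphism is again a conflation, this yields a conflation $0\to Y\overset{g}\to Z\overset{\rho}\to Q_1\to 0$ with $Q_1\in\mathcal P$ (equivalently, the conflation $0\to X\overset{\binom{f}{\alpha}}\to Y\oplus Q_0\to Z\to 0$) together with the pushout relation $gf=q\alpha$, where $q\colon Q_0\to Z$. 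As $q\alpha$ factors through $Q_0\in\mathcal P$ we get $\bar g\bar f=0$, and I claim $\bar g=\operatorname{coker}\bar f$. Existence of the factorization of any $\bar u$ with $\bar u\bar f=0$ uses that $\alpha$ is a $\mathcal P$-preenvelope; uniqueness amounts to $\bar g$ being epic, which follows by combining the preenvelope property of $\alpha$ with the fact that $g$ is an inflation whose cokernel $Q_1$ lies in $\mathcal P$ (after correcting by a term factoring through $\mathcal P$, one reduces to a map killed by $g$, which then factors through $\rho$). \textbf{Dually}, the kernel of $\bar f$ is $\bar k\colon\bar K\to\bar X$, where $0\to P_1\to K\overset{k}\to X\to 0$ is obtained by pulling back a $\mathcal P$-precover conflation $0\to P_1\to P_0\overset{\beta}\to Y\to 0$ along $f$. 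Hence $\mathcal{M}/\mathcal{P}$ is preabelian.

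For part (1) it remains to verify the semi-abelian condition: for every $\bar f$ the canonical comparison $c\colon\operatorname{Coim}\bar f\to\operatorname{Im}\bar f$ is both monic and epic. Using the descriptions above, $\operatorname{Im}\bar f=\ker(\operatorname{coker}\bar f)$ is represented by a subobject $\bar{\tilde K}\hookrightarrow\bar Y$ (pull back a precover of $Z$ along $g$) and $\operatorname{Coim}\bar f=\operatorname{coker}(\ker\bar f)$ by a quotient $\bar X\twoheadrightarrow\bar Z_K$ (push out a preenvelope of $K$ along $k$). The map $c$ is induced by a genuine morphism in $\mathcal M$ fitting into a morphism between these two conflations; to see that $c$ is epic (resp. monic) one computes the cokernel (resp. kernel) of this morphism in $\mathcal{M}/\mathcal{P}$ and checks that it is assembled from objects of $\mathcal P$, hence is zero. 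The uniform mechanism is that every discrepancy term produced by the comparison of pushout/pullback squares is an object of $\mathcal P$ and therefore dies in the quotient; this is exactly what $(1')$ guarantees, and it is the reason that mono-and-epi holds while isomorphism need not.

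For part (2), $c$ is an isomorphism — equivalently $\mathcal{M}/\mathcal{P}$ is abelian — precisely when the conflation measuring the defect of $c$ splits, i.e. when a connecting class in an $\Ext^1$-group vanishes. The role of Section \ref{section 3} is that this defect is governed by extensions of objects of $\mathcal P$ by objects of $\mathcal P$ taken along the two distinguished classes $\mathcal S$ and $\mathcal T$, so self-orthogonality of $\mathcal P$ with respect to either class forces the defect to vanish; this supplies the $(\Leftarrow)$ directions of (i) and (ii). For the $(\Rightarrow)$ direction I would argue contrapositively: from a non-split conflation witnessing the failure of self-orthogonality I would build a test morphism $\bar f$ and use the explicit kernel/cokernel formulas to exhibit $c$ as a proper bimorphism, so that $\mathcal{M}/\mathcal{P}$ cannot be abelian.

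The main obstacle I anticipate is the bookkeeping in the semi-abelian step: producing $c$ as an honest morphism in $\mathcal M$ and verifying that \emph{both} its kernel and its cokernel in the quotient lie in $\add\mathcal P$. This requires stacking several pushout and pullback squares into one commutative diagram and repeatedly invoking the preenvelope/precover universal properties together with the fact that inflations (deflations) are the kernels (cokernels) of their cokernels (kernels) in $\mathcal M$. The delicate point is to separate cleanly the portion of the argument that uses only $(1')$ (yielding mono-and-epi) from the portion that needs self-orthogonality (yielding isomorphism), and to match the latter precisely against the two classes $\mathcal S$ and $\mathcal T$.
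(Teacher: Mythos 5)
Your part (1) follows the paper's route exactly: kernels and cokernels in $\mathcal{M}/\mathcal{P}$ are obtained by pulling back a precover conflation and pushing out a preenvelope conflation (this is Proposition \ref{ker-cok exist} together with Lemma \ref{Lemma 2.5} and Lemma \ref{ker-coker}), and semi-abelianness comes from a comparison morphism realized in $\mathcal{M}$ whose discrepancy lies in $\mathcal{P}$ and hence dies in the quotient. In the paper this comparison is the map $g\colon X\to X'$ of Fig.\ 3.1, sitting in a conflation $0\to X\overset{g}\to X'\to P\to 0$ with $P\in\mathcal{P}$ which is shown to be $\Hom_{\mathcal{M}}(-,\mathcal{P})$-exact (using that the middle column of Fig.\ 3.1 splits), so that $\underline{g}$ has cokernel $\underline{P}=0$ and is epic; your ``uniform mechanism'' is precisely this, and with the bookkeeping you anticipate it goes through.

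The genuine gap is in your $(\Leftarrow)$ step of part (2). You claim the defect of the comparison ``is governed by extensions of objects of $\mathcal{P}$ by objects of $\mathcal{P}$ taken along $\mathcal{S}$ and $\mathcal{T}$,'' so that self-orthogonality makes it vanish. But the defect conflation produced in part (1) is $0\to X\overset{g}\to X'\to P\to 0$: only \emph{one} end term lies in $\mathcal{P}$, and for a general $f$ this conflation does not split even when the quotient is abelian (its splitting would force $\underline{g}$ to be invertible, i.e., $\underline{f}$ to have zero kernel). The missing reduction, which is the crux of the paper's proof of Theorem \ref{main Th}, is: (a) restrict to $\underline{f}$ a monomorphism --- this suffices, since a semi-abelian category is abelian iff regular morphisms are isomorphisms (Remark \ref{rmk 2.2}), equivalently iff every mono is a kernel; (b) resolve $X'$ by a $\Hom_{\mathcal{M}}(\mathcal{P},-)$-exact conflation $0\to P_1\to P_0\to X'\to 0$ and pull back along $g$ to obtain $0\to K\to P_0\to P\to 0$, which lies in $\mathcal{S}$ by construction; (c) use the monomorphism hypothesis to conclude $\underline{K}=\Ker\,\underline{g}=0$, i.e., $K\in\mathcal{P}$ --- only at this point does self-orthogonality with respect to $\mathcal{S}$ apply, splitting this conflation and hence the defect conflation. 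Without (a)--(c) your one-line vanishing claim fails. A smaller but real omission sits in your contrapositive $(\Rightarrow)$: to show the test morphism $\underline{m}$ is regular but \emph{not} invertible, you need the implication ``$\underline{m}$ iso $\Rightarrow$ the $\mathcal{S}$-conflation splits,'' which the paper extracts nontrivially from Heller's matrix criterion (Lemma \ref{iso}) combined with $\Hom_{\mathcal{M}}(-,\mathcal{P})$-exactness of $0\to M_1\overset{m}\to M_2\to Q\to 0$; you will need this tool, or an equivalent, to close the loop.
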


Then we treat the pair $(\mathcal{S}(\mathcal{M}), (\mathcal{E(M)},\mathcal{E}))$, and extend the self-orthogonal conditions in Theorem \ref{theorem 1.1} to the exact substructure $(\mathcal{E}(\mathcal{M}),\mathcal{E}_0)$, conflations splitting in the middle degree. The relation between the abelian quotient $\mathcal{E}(\mathcal{M})/\mathcal{S}(\mathcal{M})$ and cluster tilting theory is the following. Compare with \cite[Section 1]{N2} and \cite[Theorem 4.8]{L}.

\begin{thm}\label{theorem 1.2}
Let $(\mathcal{E},\mathcal{M})$ be an exact category. Then $\mathcal{E}(\mathcal{M})/\mathcal{S}(\mathcal{M})$ is an abelian category. Moreover, $(\mathcal{E}(\mathcal{M}),\mathcal{E}_0)$ is the unique exact structure such that it is a cluster quotient.
\end{thm}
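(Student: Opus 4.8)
The plan is to run the entire argument through the middle-degree functor $V_2\colon\mathcal{E(M)}\to\mathcal{M}$, $V_2(X)=X_2$, together with its two adjoints. Writing $S_{A,B}=(A\xrightarrow{\binom{\id}{0}}A\oplus B\xrightarrow{(0,\id)}B)$ for the split conflation on $(A,B)$, and $I_1M=(M\xrightarrow{\id}M\to0)=S_{M,0}$, $I_3M=(0\to M\xrightarrow{\id}M)=S_{0,M}$, one checks $\Hom(I_3M,X)\cong\Hom(M,X_2)$ and $\Hom(X,I_1M)\cong\Hom(X_2,M)$, i.e. $I_3\dashv V_2\dashv I_1$. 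First I would record that $\mathcal{E}_0=\{\xi:V_2(\xi)\text{ is split}\}$ is an exact substructure of $(\mathcal{E(M)},\mathcal{E})$ (being $V_2$-split is preserved under pushout, pullback and the obscure axiom), and that $V_2$ is therefore \emph{split-exact} on $\mathcal{E}_0$.

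The second step is to show $\mathcal{S}(\mathcal{M})$ is a genuine cluster tilting subcategory of $(\mathcal{E(M)},\mathcal{E}_0)$. For $(1')$ the canonical $\mathcal{P}$-precover and $\mathcal{P}$-preenvelope of $X=(X_1\xrightarrow{f}X_2\xrightarrow{g}X_3)$ are $\beta_X\colon S_{X_1,X_2}\xrightarrow{(f,\id)}X$ and $\alpha_X\colon X\xrightarrow{\binom{\id}{g}}S_{X_2,X_3}$, with kernel $S_{0,X_1}$ and cokernel $S_{X_3,0}$; inspecting the middle degree shows both resulting conflations are $V_2$-split, so they lie in $\mathcal{E}_0$. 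For $(2')$ I would prove $\Ext^1_{\mathcal{E}_0}(\mathcal{S}(\mathcal{M}),\mathcal{S}(\mathcal{M}))=0$ by dévissage along $0\to S_{0,B}\to S_{A,B}\to S_{A,0}\to0$: split-exactness of $V_2$ and the adjunctions make each $S_{0,M}$ $\mathcal{E}_0$-projective and each $S_{M,0}$ $\mathcal{E}_0$-injective, while the only remaining term $\Ext^1_{\mathcal{E}_0}(S_{A,0},S_{0,B'})$ vanishes by the explicit check that a $V_2$-split extension of the pure sub $S_{A,0}$ by the pure quotient $S_{0,B'}$ always admits a section. By the abelian-quotient theorem for exact cluster tilting (compare \cite{Be}, \cite{LZ}) this yields the first assertion and recovers the Neeman--Lin abelian structure on $\mathcal{E}(\mathcal{M})/\mathcal{S}(\mathcal{M})$.

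For uniqueness, let $\mathcal{E}'$ be any exact structure making $\mathcal{S}(\mathcal{M})$ cluster tilting. The cancellation properties of exact categories (if $h\circ f$ is a deflation then $h$ is a deflation, and dually for inflations) identify the canonical $\beta_X,\alpha_X$ with $\mathcal{E}'$-deflations and $\mathcal{E}'$-inflations, since they factor through the $\mathcal{E}'$-approximations furnished by $(1')$. For a conflation $\xi\colon0\to X\to Y\to Z\to0$ I form $\xi_1=\beta_Z^{\,*}\xi\colon0\to X\to Y'\to S_{Z_1,Z_2}\to0$ (pullback along the precover of $Z$) and $\xi_2=(\alpha_X)_*\xi_1\colon0\to S_{X_2,X_3}\to Y''\to S_{Z_1,Z_2}\to0$ (pushout along the preenvelope of $X$), so $\xi_2$ has split ends. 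To get $\mathcal{E}'\subseteq\mathcal{E}_0$, take $\xi\in\mathcal{E}'$: then $\xi_2\in\mathcal{E}'$ and self-orthogonality forces $\xi_2$ to split, so $V_2(\xi_2)$ splits; since $V_2(\beta_Z)$ is a split epimorphism and $V_2(\alpha_X)$ a split monomorphism, the induced maps on $\Ext^1_{\mathcal{M}}$ are split monomorphisms, so the vanishing of the class of $V_2(\xi_2)$ forces that of $V_2(\xi)$, i.e. $\xi\in\mathcal{E}_0$. For $\mathcal{E}_0\subseteq\mathcal{E}'$, take $\xi\in\mathcal{E}_0$: then $\xi_2\in\mathcal{E}_0$ has split ends, so $\mathcal{E}_0$-self-orthogonality makes it actually \emph{split}, whence $\xi_2\in\mathcal{E}'$ trivially; the reflection lemmas—pushout along an inflation and pullback along a deflation each reflect conflations—then give $\xi_1\in\mathcal{E}'$ and finally $\xi\in\mathcal{E}'$. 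Hence $\mathcal{E}'=\mathcal{E}_0$.

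The step I expect to be the main obstacle is the uniqueness, and within it the two reflection lemmas together with the cancellation identification: one must verify carefully that pullback along the only morphism-theoretically canonical precover $\beta_Z$ and pushout along $\alpha_X$ reflect $\mathcal{E}'$-conflations, and that these $\beta_Z,\alpha_X$ are genuinely $\mathcal{E}'$-deflations and $\mathcal{E}'$-inflations. Once these structural points are secured, the only homological input is self-orthogonality and the split-exactness of $V_2$, and the remaining verifications—closure of $\mathcal{E}_0$, the approximation computation, and the dévissage—are routine.
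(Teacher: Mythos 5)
Your proposal is correct, and it arrives at the theorem along the same skeleton as the paper, but with a genuinely different packaging that is worth comparing. The paper constructs exactly your canonical approximations $\beta_X\colon S_{X_1,X_2}\to X$ and $\alpha_X\colon X\to S_{X_2,X_3}$ in Proposition \ref{S(M) pre}; it then characterizes the $\Hom_{\mathcal{E}(\mathcal{M})}(\mathcal{S}(\mathcal{M}),-)$- and $\Hom_{\mathcal{E}(\mathcal{M})}(-,\mathcal{S}(\mathcal{M}))$-exact conflations as $\mathcal{E}_{0}^{-1}$ resp.\ $\mathcal{E}_{0}^{1}$ (Lemma \ref{char E(M)}), shows $\mathcal{E}_0$ is the minimal substructure containing the Section \ref{section 3} classes $\mathcal{S}$, $\mathcal{T}$ (Lemma \ref{min con s}), and derives abelianness from its internal Theorem \ref{main Th} rather than from full cluster tilting. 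You instead work entirely inside $(\mathcal{E}(\mathcal{M}),\mathcal{E}_0)$ with the adjoint triple $I_3\dashv V_2\dashv I_1$, verify genuine cluster tilting (your d\'evissage replaces the paper's one-line ``this is clear'' for splitting of $\mathcal{E}_0$-conflations with split ends), and quote \cite{DL}/\cite{LZ} for abelianness---a citation the paper itself endorses in Section \ref{section 3}; this bypasses $\mathcal{S}$, $\mathcal{T}$ and Lemma \ref{char E(M)} altogether, at the mild cost of importing an external theorem where the paper is self-contained. For uniqueness the mechanisms coincide: your sandwich $\xi\mapsto\xi_2=(\alpha_X)_*\beta_Z^{*}\xi$ with split ends is the paper's pushout--pullback construction, run via injectivity of the induced maps on $\Ext^1_{\mathcal{M}}$ instead of by contradiction, and your reflection lemmas are exactly the \cite[Proposition 2.16]{Bu} cancellation that powers the paper's Lemma \ref{min con s}; indeed your explicit identification of $\alpha_X$, $\beta_Z$ as $\mathcal{E}'$-inflations/deflations via factoring through the $\mathcal{E}'$-approximations fills in a step the paper's terse closing appeal to Lemma \ref{min con s} leaves implicit. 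Two points to pin down in a write-up: (i) the cancellation \cite[Proposition 2.16]{Bu} requires the factor to admit a cokernel (dually a kernel)---satisfied here since every morphism in sight sits in an ambient $\mathcal{E}$-conflation, but not deletable without weak idempotent completeness; (ii) ``any exact structure'' must be read, as in the paper, as exact substructures of $(\mathcal{E}(\mathcal{M}),\mathcal{E})$, since otherwise $V_2(\xi)$ need not be a conflation of $\mathcal{M}$ and your $\Ext^1_{\mathcal{M}}$-class argument would have no meaning.
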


This paper is organized as follows. In section \ref{section 2}, we make the preliminary definitions and results. In section \ref{section 3}, we are denoted to proving Theorem \ref{theorem 1.1}, which is divided into two parts. In Section \ref{section 4}, we discuss the pair $(\mathcal{S}(\mathcal{M}),(\mathcal{E(M)},\mathcal{E}))$ and prove Theorem \ref{theorem 1.2}.
Throughout this paper, for an additive category, we always assume that it is closed under isomorphisms. For an additive full subcategory, we mean that it is closed under finite sums and summands.

\section{Preliminaries}\label{section 2}
   In this section, we introduce some definitions and homological facts for the later use. Let $\mathcal{M}$ be an additive category and $f:X\rightarrow Y$ a morphism. A weak kernel of $f$ is a morphism $g: K\to X$ with $fg=0$ such that any morphism $h$ with $fh=0$ factors through $g$. A weak kernel $g$ of $f$ is the kernel if and only if it is a monomorphism. Dually, there are corresponding facts for weak cokernels and cokernels. We write $k_f: \mathrm{Ker}f\rightarrow X$ for the kernel of $f:X\rightarrow Y$ and $c_f: Y\rightarrow \mathrm{Coker}f$ for its cokernel. The coimage of $f$, i.e., the cokernel of
   $k_f$ is denoted by $c_{k_{f}}:X\rightarrow \mathrm{Coim}f$. The image of $f$, i.e., the kernel of
   $c_f$ is denoted by $k_{c_{f}}:\mathrm{Im}f\rightarrow Y$. There is a unique morphism $\hat{f}:\mathrm{Coim}f\to \mathrm{Im}f$ making the following diagram commutative.

   $$\xymatrix{
		\mathrm{Ker}f\ar[rr]^{k_f}&&X\ar[rr]^{f}\ar[d]_{c_{k_{f}}}&&Y\ar[rr]^{c_f}&&\mathrm{Coker}f\\
		&&\mathrm{Coim}f\ar[rr]^{\hat f}&&\mathrm{Im}f\ar[u]_{k_{c_{f}}}\\
	}$$\

\begin{df}\label{Definition 2.1}
(\cite{Bu,R}) An additive category $\mathcal{M}$ with kernels and cokernels is called semi-abelian if for any morphism $f:X\rightarrow Y$, the canonical morphism $\hat f:\mathrm{Coim}f\rightarrow \mathrm{Im}f$ is both an epimorphism and a monomorphism; it is abelian if $\hat{f}$ is an isomorphism.
\end{df}

\begin{rmk}\label{rmk 2.2}
In some literatures a morphism which is both an epimorphism and a monomorphism is called regular \cite{R}. It can be seen that a semi-abelian category is abelian if and only if regular morphisms and isomorphisms coincide.
\end{rmk}

Let $\mathcal{P}$ be an additive full subcategory of $\mathcal{M}$. Let $\Hom_{\mathcal{P}}(X,Y)$ be the subset of morphisms in $\Hom_{\mathcal{M}}(X,Y)$, which factors through an object in $\mathcal{P}$. Then $\mathcal{I}:=\{f\in \Hom_{\mathcal{P}}(X,Y)|X,Y\in \mathcal{M}\}$ is an ideal of $\mathcal{M}$, the corresponding quotient category is denoted by $\mathcal{M}$/$\mathcal{P}$. This is an additive category with products computed in $\mathcal{M}$. There is an additive functor $\mathcal{Q}:\mathcal{M}\rightarrow \mathcal{M}/\mathcal{P}$ which is full and dense. For an object $X\in \mathcal{M}$, we denote by $\underline{X}$ the image of $X$ under $\mathcal{Q}$. Hence $\underline{X}$ is 0 in $\mathcal{M}/\mathcal{P}$ if and only if $X\in \mathcal{P}$. For a morphism $f:X\rightarrow Y$ in $\mathcal{M}$, we denote by $\underline{f}$ the imgae of $f$ under $\mathcal{Q}$. Then $\underline{f}$ is 0 in $\mathcal{M}/\mathcal{P}$ if and only if it factors through an object in $\mathcal{P}$. We need the following fact, see \cite[Theorem 2.2]{H}.

 \begin{lem}\label{iso}
Let $\mathcal{M}$ be an additive category and $\mathcal{P}$ an additive full subcategory of $\mathcal{M}$. Let $f:X\to Y$ be a morphism in $\mathcal{M}$. Then $\underline{f}$ is an isomorphism in $\mathcal{M}$/$\mathcal{P}$ if and only if there exist $P$ and $Q$ in $\mathcal{P}$ completing the following commutative diagram in $\mathcal{M}$,

$$\xymatrix{
  X \ar[d]_{i} \ar[r]^{f}
               & Y    \\
  X\oplus P \ar[r]^{\widetilde{f}}
                & Y\oplus Q  \ar[u]_{p}
          }
$$
where $\widetilde{f}$ is an isomorphism and the vertical morphisms are the canonical injection and projection respectively.
\end{lem}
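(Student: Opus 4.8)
The plan is to argue both directions by hand at the level of representatives, exploiting that $\mathcal Q\colon\mathcal M\to\mathcal M/\mathcal P$ annihilates exactly the maps factoring through $\mathcal P$ and sends every object of $\mathcal P$ to $0$. For the ``if'' direction I would first record that $\underline i$ and $\underline p$ are isomorphisms in $\mathcal M/\mathcal P$: if $\pi\colon X\oplus P\to X$ is the projection then $\pi i=1_X$ while $1_{X\oplus P}-i\pi$ factors through $P\in\mathcal P$, so $\underline\pi$ is a two-sided inverse of $\underline i$, and dually for $\underline p$. Since $\widetilde f$ is an honest isomorphism, $\underline{\widetilde f}$ is invertible, and from the commutativity $f=p\,\widetilde f\,i$ we get $\underline f=\underline p\,\underline{\widetilde f}\,\underline i$, a composite of isomorphisms; hence $\underline f$ is an isomorphism.

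For the ``only if'' direction, suppose $\underline f$ is an isomorphism and pick $g\colon Y\to X$ with $\underline g=\underline f^{-1}$. Then $1_X-gf$ factors through $\mathcal P$, say $1_X-gf=\beta\alpha$ with $\alpha\colon X\to P$ and $\beta\colon P\to X$, $P\in\mathcal P$. The crucial step is to notice that $\binom{f}{\alpha}\colon X\to Y\oplus P$ is then a split monomorphism in $\mathcal M$, with retraction $(g,\beta)$, since $(g,\beta)\binom{f}{\alpha}=gf+\beta\alpha=1_X$. This splitting presents $Y\oplus P$ as a direct sum $X\oplus C$, where $C$ is the complementary summand cut out by the idempotent $e=1_{Y\oplus P}-\binom{f}{\alpha}(g,\beta)$, and the associated isomorphism $X\oplus C\to Y\oplus P$ has $\binom{f}{\alpha}$ as its first column. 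In particular its top-left entry is $f$, so precomposing with the canonical injection $X\to X\oplus C$ and postcomposing with the canonical projection $Y\oplus P\to Y$ returns $f$; this is exactly the diagram demanded by the statement, with the $P$ and $Q$ of the statement taken to be $C$ and $P$.

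The hard part will be verifying that the complementary summand $C$ genuinely lies in $\mathcal P$, and here I would bring in the \emph{other} half of invertibility, $\underline f\,\underline g=\underline{1_Y}$. Working in the quotient and using $\underline P=0$, the idempotent $e$ maps to $\underline{1_Y}-\underline f\,\underline g=0$, so $e$ factors through some $R\in\mathcal P$; writing the splitting of $e$ as $\iota\rho=e$, $\rho\iota=1_C$, the identity $1_C=\rho\,e\,\iota$ then factors through $R$, which exhibits $C$ as a direct summand of an object of $\mathcal P$ and hence, by closure of $\mathcal P$ under summands, puts $C$ in $\mathcal P$. I expect the only genuinely delicate point to be the existence and splitting of this complementary summand; in the idempotent-complete setting standard for exact categories this is automatic, and once $C$ is available the factorization argument above places it in $\mathcal P$ and completes the proof. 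A final remark I would make is that the more obvious attempt, writing $f$ directly as the corner of a symmetric $2\times2$ block isomorphism built from $f,g$ and the factorization data, leads to an over-determined system for the remaining matrix entries and does not close up in general, which is why routing the argument through the split monomorphism $\binom{f}{\alpha}$ is the right move.
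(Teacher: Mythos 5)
The first thing to note is that the paper itself does not prove this lemma at all: it is imported wholesale from Heller \cite[Theorem 2.2]{H}, so there is no in-paper argument to compare against, and your attempt has to stand or fall on its own. Your ``if'' direction is complete and correct. In the ``only if'' direction, everything \emph{downstream} of the existence of the complement $C$ is also sound: granting $Y\oplus P\cong X\oplus C$ via a splitting $e=\iota\rho$, $\rho\iota=1_C$ of the idempotent $e=1_{Y\oplus P}-\binom{f}{\alpha}(g,\beta)$, the map $\bigl(\binom{f}{\alpha},\ \iota\bigr)\colon X\oplus C\to Y\oplus P$ is indeed invertible with inverse assembled from $(g,\beta)$ and $\rho$ (the cross terms vanish because $e\iota=\iota$, $\rho e=\rho$, $(g,\beta)e=0$ and $e\binom{f}{\alpha}=0$), its corner is $f$, and your argument that $C\in\mathcal{P}$ --- from $\underline{e}=0$, factoring $e$ through some $R\in\mathcal{P}$, writing $1_C=\rho e\iota$ to exhibit $C$ as a summand of $R$, and invoking closure under summands --- is exactly right.

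The genuine gap is the step you yourself flagged and then waved through: the splitting of $e$, i.e.\ the existence of $C$. The lemma is stated for a \emph{bare additive category} $\mathcal{M}$, where idempotents need not split, and your fallback that idempotent completeness is ``standard for exact categories'' both changes the hypotheses and does not match this paper: exact categories in the sense of B\"{u}hler carry no idempotent-completeness assumption (not even a weak one), and the paper applies the lemma to an arbitrary exact $\mathcal{M}$ and to the conflation category $\mathcal{E}(\mathcal{M})$ without ever imposing such a condition. To be precise about what you need: your construction amounts to taking a kernel of the split epimorphism $(g,\beta)\colon Y\oplus P\to X$, so \emph{weak} idempotent completeness of $\mathcal{M}$ would suffice --- but it is not among the hypotheses. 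Nor is the defect removable by the cheap stabilization trick: writing $e=ts$ through $R\in\mathcal{P}$ and normalizing so that $ts=e$ and $\pi=st$ is idempotent, your route closes up precisely when $\pi$ splits, whereas stabilization only yields that $e\oplus(1_R-\pi)$ splits through $R$, which is strictly weaker; chasing this through produces a persistent defect rather than an isomorphism. So as written your argument proves the lemma only for (weakly) idempotent complete $\mathcal{M}$; to cover the stated generality you must either add that hypothesis explicitly or follow the cited proof of Heller, which does not route through the complement object.
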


\begin{rmk}\label{rmk 2.4}
In the above commutative diagram, we can take $i=\left({\begin{array}{*{10}{c}}
	{1}\\
    {0}\\
	\end{array}} \right)$, $p=\left( {\begin{array}{*{10}{c}}
	{{0}}&{{1}}\\
	\end{array}} \right)$. Hence we have $\widetilde{f}=\left({\begin{array}{*{10}{c}}
	{{f}}&{{b}}\\
	{{c}}&{{d}}\\
	\end{array}} \right)$ by the matrix notation with $b:P\to Y,~d:P\to Q$ and $c:X\to Q.$
\end{rmk}

We recall the definitions of precovers and preenvelopes respectively. A morphism $f: P\rightarrow X$ in $\mathcal{M}$ with $P\in \mathcal{P}$ is called a $\mathcal{P}$-precover of $X$ if any morphism $g: Q\rightarrow X$ with $Q\in \mathcal{P}$ factors through $f$. If any $X\in \mathcal{M}$ admits a $\mathcal{P}$-precover then $\mathcal{P}$ is called a contravariantly finite subcategory. The notions of preenvelopes and covariantly finite subcategories are defined dually.

Now suppose that $(\mathcal{M},\mathcal{E})$ is an exact category, where $\mathcal{E}$ is the corresponding exact structure, i.e., the class of conflations. If for any $X\in \mathcal{M}$, there exists a $\mathcal{P}$-precover $f: P\rightarrow X$ which is compatible with $\mathcal{E}$, that is, $f: P\rightarrow X$ is an inflation, then $\mathcal{P}$ is called strongly contravariantly finite. Dually, we have the notion of strongly covariantly finiteness. The subcategory $\mathcal{P}$ is called strongly functorially finite if $\mathcal{P}$ is both strongly contravariantly finite and strongly covariantly finite.

We say that a conflation $0\rightarrow X\rightarrow Y\rightarrow Z\rightarrow 0$ in $\mathcal{M}$ is $\Hom_{\mathcal{M}}(\mathcal{P},-)$-exact if for any $P\in \mathcal{P}$, the sequence $0\rightarrow \Hom_{\mathcal{M}}(P,X)\rightarrow \Hom_{\mathcal{M}}(P,Y)\rightarrow \Hom_{\mathcal{M}}(P,Z)\rightarrow 0$ is exact in the category of abelian groups. Dually, We say that a conflation $0\rightarrow X\rightarrow Y\rightarrow Z\rightarrow 0$ in $\mathcal{M}$ is $\Hom_{\mathcal{M}}(-,\mathcal{P})$-exact if for any $P\in \mathcal{P}$, the sequence  $0\rightarrow \Hom_{\mathcal{M}}(Z,P)\rightarrow \Hom_{\mathcal{M}}(Y,P)\rightarrow \Hom_{\mathcal{M}}(X,P)\rightarrow 0$ is exact.	

\begin{lem}\label{Lemma 2.5}
Let $\mathcal{M}$ be an exact category and $\mathcal{P}$ a full subcatrgory of $\mathcal{M}$.
\begin{enumerate}
\item[(1)] If $\mathcal{P}$ is strongly covariantly finte, then for any morphism $f:X\rightarrow Y$ in $\mathcal{M}$, there exist some $Q\in\mathcal{P}$ and a conflation $0\to X\rightarrow Y\oplus Q\to Z\to 0$ which is $\Hom_{\mathcal{M}}(-,\mathcal{P})$-exact.
\item[(2)] If $\mathcal{P}$ is strongly contravariantly finte, then for any morphism $g:Y\rightarrow Z$ in $\mathcal{M}$, there exist some $P\in\mathcal{P}$ and a conflation $0\to X\rightarrow Y\oplus P\to Z\to 0$ which is $\Hom_{\mathcal{M}}(\mathcal{P},-)$-exact.
\end{enumerate}
\end{lem}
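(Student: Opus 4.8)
The two statements are mutually dual, so the plan is to prove $(1)$ in detail and to obtain $(2)$ by dualizing. The strategy for $(1)$ is to manufacture the required conflation as a pushout and then read off $\Hom_{\mathcal{M}}(-,\mathcal{P})$-exactness directly from the factorization property of a preenvelope.

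First I would use that $\mathcal{P}$ is strongly covariantly finite to choose, for the given $X$, a $\mathcal{P}$-preenvelope $e\colon X\to Q$ with $Q\in\mathcal{P}$ that is an inflation. Given the arbitrary morphism $f\colon X\to Y$, I would then form the pushout of $e$ along $f$. Since inflations are stable under pushout in an exact category and the pushout square is part of a conflation, this yields a conflation
$$0\to X\xrightarrow{\binom{-f}{e}}Y\oplus Q\to Z\to 0$$
with $Q\in\mathcal{P}$, where $Z$ is the pushout (the precise signs on the two components of the inflation depend on the pushout convention and are immaterial for the statement). This already has the shape demanded in the lemma.

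It then remains to check that the conflation is $\Hom_{\mathcal{M}}(-,\mathcal{P})$-exact. Fixing $P\in\mathcal{P}$ and applying $\Hom_{\mathcal{M}}(-,P)$, the left exactness of this functor on any conflation gives exactness at $\Hom_{\mathcal{M}}(Z,P)$ and at $\Hom_{\mathcal{M}}(Y\oplus Q,P)$, so the only thing left to verify is the surjectivity of the restriction map $\Hom_{\mathcal{M}}(Y\oplus Q,P)\to\Hom_{\mathcal{M}}(X,P)$. This map carries a pair $(u,v)\colon Y\oplus Q\to P$ to $-uf+ve$, so I must show every $w\colon X\to P$ arises this way. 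Here the preenvelope property enters: since $P\in\mathcal{P}$, the morphism $w$ factors as $w=v_0e$ for some $v_0\colon Q\to P$, whence $(0,v_0)$ is sent to $v_0e=w$. Thus the map is surjective and the conflation is $\Hom_{\mathcal{M}}(-,\mathcal{P})$-exact.

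For $(2)$ I would run the dual argument: strong contravariant finiteness supplies a $\mathcal{P}$-precover $c\colon P\to Z$ that is a deflation, forming the pullback of $c$ along $g\colon Y\to Z$ produces a conflation $0\to X\to Y\oplus P\to Z\to 0$, and the precover property gives surjectivity of $\Hom_{\mathcal{M}}(R,Y\oplus P)\to\Hom_{\mathcal{M}}(R,Z)$ for every $R\in\mathcal{P}$, which is exactly $\Hom_{\mathcal{M}}(\mathcal{P},-)$-exactness. The one genuinely structural step—and the place I would be most careful—is the claim that $\binom{-f}{e}$ is an inflation; this is not formal from $e$ being an inflation alone, but follows from the stability of inflations under pushout in an exact category. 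Once that is granted, the $\Hom$-exactness is an immediate consequence of the defining factorization property of the preenvelope (resp.\ precover).
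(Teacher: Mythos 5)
Your proposal is correct and matches the paper's proof essentially step for step: both form the pushout of an inflation $\mathcal{P}$-preenvelope $X\to Q$ along $f$, invoke the standard exact-category fact (\cite[Proposition 2.12]{Bu}) that this yields a conflation $0\to X\to Y\oplus Q\to Z\to 0$, and deduce $\Hom_{\mathcal{M}}(-,\mathcal{P})$-exactness from the factorization property of the preenvelope, with $(2)$ obtained by duality. The only cosmetic differences are the sign convention on the inflation and your explicit mention of the left-exactness of $\Hom_{\mathcal{M}}(-,P)$, which the paper leaves implicit.
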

\begin{proof}
(1) Since $\mathcal{P}$ is strongly covariantly finte, we can pick a $\mathcal{P}$-preenvelope $\alpha: X\rightarrow Q $ which is an inflation. Consider the following push-out of $\alpha:X\rightarrow Q$ and $f:X\rightarrow Y$.

$$\xymatrix{
	X\ar[r]^f\ar[d]_\alpha&Y\ar[d]\\
	Q\ar[r]&Z
}$$
We have a confation $0\rightarrow X\stackrel{\begin{pmatrix}f\\-\alpha\end{pmatrix}} {\longrightarrow} Y\oplus Q\stackrel{} {\longrightarrow} Z\rightarrow 0$ in $\mathcal{M}$ by \cite[Proposition 2.12]{Bu}. Since $\alpha$ is a $\mathcal{P}$-preenvolope, for any morphism $\beta:X\rightarrow Q^{\prime}$ with $Q^{\prime}\in \mathcal{P}$, there is $\gamma:Q\rightarrow Q^{\prime}$ such that $\beta=\gamma\alpha$, then we obtain the following commutative diagram in $\mathcal{M}$.
$$\xymatrix{
	&&Q^{\prime}\\
	0\ar[rr]&&X\ar[rr]_-{\begin{pmatrix}f\\-\alpha\end{pmatrix}}\ar[u]^\beta&&Y\oplus{Q}\ar[ull]_-{\begin{pmatrix}0&-\gamma\end{pmatrix}}\\
}$$
That is, the conflation $0\rightarrow X\stackrel{\begin{pmatrix}f\\-\alpha\end{pmatrix}} {\longrightarrow} Y\oplus Q\stackrel{} {\longrightarrow} Z\rightarrow 0$ is $\Hom_{\mathcal{M}}(-,\mathcal{P})$-exact.

(2) is the dual.		
\end{proof}
\section{The quotient category $\mathcal{M}/\mathcal{P}$}\label{section 3}
Let $\mathcal{M}$ be an exact category and $\mathcal{P}$ an additive full subcategory of $\mathcal{M}$. In this section, we consider the abelian structure of the quotient category $\mathcal{M}$/$\mathcal{P}$. We require the following setup.

\begin{enumerate}
\item[$\blacktriangle$] $\mathcal{P}$ is strongly covariantly finte, and for any $X\in \mathcal{M}$, there is a conflation $0\to X \overset{\alpha }\to Q_0\to Q_1\to 0$ with $Q_0, Q_1\in \mathcal{P}$ such that $\alpha $ is a $\mathcal{P}$-preenvelope.
\item[$\blacktriangledown$] $\mathcal{P}$ is strongly contravariantly finte, and for any $X\in \mathcal{M}$, there is a conflation $0\to P_1 \to P_0 \overset{\beta }\to X\to 0$ with $P_0, P_1\in \mathcal{P}$ such that $\beta $ is a $\mathcal{P}$-precover.
\end{enumerate}

\begin{rmk}
\begin{enumerate}
\item[(1)] The conflation $0\to X \overset{\alpha }\to Q_0\to Q_1\to 0$ in $\blacktriangle$ is $\Hom_{\mathcal{M}}(-,\mathcal{P})$-exact;
\item[(2)] The conflation $0\to P_1 \to P_0 \overset{\beta }\to X\to 0$ in $\blacktriangledown$ is $\Hom_{\mathcal{M}}(\mathcal{P},-)$-exact;
\item[(3)] It can be seen that $\mathcal{P}$ is a pseudo-cluster tilting subcategory if it satisfies both $\blacktriangle$ and $\blacktriangledown$.
\end{enumerate}
\end{rmk}

\begin{prop}\label{ker-cok exist}
Let $\mathcal{M}$ be an exact category and $\mathcal{P}$ a full subcategory of $\mathcal{M}$.

\begin{enumerate}
\item[(1)] If $\mathcal{P}$ satisfies condition $\blacktriangle$, then $\mathcal{M}$/$\mathcal{P}$ has cokernels.
\item[(2)] If $\mathcal{P}$ satisfies condition $\blacktriangledown$, then $\mathcal{M}$/$\mathcal{P}$ has kernels.
\end{enumerate}
\end{prop}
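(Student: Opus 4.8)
The plan is to deduce (2) from (1) by a straightforward dualization: reversing the arrows in $\mathcal{M}$ interchanges condition $\blacktriangle$ with condition $\blacktriangledown$, cokernels with kernels, and $\mathcal{P}$-preenvelopes with $\mathcal{P}$-precovers, so I will only construct cokernels in $\mathcal{M}/\mathcal{P}$ under $\blacktriangle$. Given a morphism $\underline{f}\colon \underline{X}\to\underline{Y}$, I first lift it to $f\colon X\to Y$ and fix, using the full strength of $\blacktriangle$, a conflation $0\to X\overset{\alpha}{\to}Q_0\overset{p}{\to}Q_1\to 0$ in which $\alpha$ is a $\mathcal{P}$-preenvelope and both $Q_0,Q_1\in\mathcal{P}$. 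Forming the push-out of $\alpha$ along $f$ (as in the proof of Lemma \ref{Lemma 2.5}, invoking \cite[Proposition 2.12]{Bu}) produces morphisms $\beta\colon Y\to Z$ and $\gamma\colon Q_0\to Z$ with $\beta f=\gamma\alpha$, and, since the push-out of an inflation preserves its cokernel, an induced conflation $0\to Y\overset{\beta}{\to}Z\overset{\delta}{\to}Q_1\to 0$ with $Q_1\in\mathcal{P}$. My claim is that $\underline{\beta}\colon\underline{Y}\to\underline{Z}$ is a cokernel of $\underline{f}$.

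That $\underline{\beta}\,\underline{f}=0$ is immediate, since $\beta f=\gamma\alpha$ factors through $Q_0\in\mathcal{P}$. For the existence half of the universal property, I would take any $\underline{h}\colon\underline{Y}\to\underline{W}$ with $\underline{h}\,\underline{f}=0$, lift it to $h$, and write $hf=\eta\xi$ with $\xi\colon X\to P$, $\eta\colon P\to W$ and $P\in\mathcal{P}$. The preenvelope property of $\alpha$ factors $\xi=\zeta\alpha$, so that $h$ and $\eta\zeta$ agree on $X$; the universal property of the push-out then yields $t\colon Z\to W$ with $t\beta=h$, whence $\underline{t}\,\underline{\beta}=\underline{h}$.

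The main obstacle is the uniqueness of the factorization, that is, showing $\underline{\beta}$ is an epimorphism, and this is exactly where the hypothesis $Q_1=\Coker\alpha\in\mathcal{P}$ (rather than merely strong covariant finiteness) will be essential. Suppose $\underline{t}\,\underline{\beta}=0$, so $t\beta=\eta\xi$ factors through some $P'\in\mathcal{P}$ with $\xi\colon Y\to P'$. I plan to extend $\xi$ along $\beta$: since $\alpha$ is a preenvelope, $\xi f=\zeta\alpha$ for some $\zeta\colon Q_0\to P'$, and feeding $\xi$ and $\zeta$ into the push-out (they agree on $X$) gives $\widetilde{\xi}\colon Z\to P'$ with $\widetilde{\xi}\beta=\xi$. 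Then $(t-\eta\widetilde{\xi})\beta=0$, so $t-\eta\widetilde{\xi}$ factors through $\delta=\Coker\beta$, say $t-\eta\widetilde{\xi}=s\delta$. Now $\eta\widetilde{\xi}$ factors through $P'\in\mathcal{P}$ and $s\delta$ factors through $Q_1\in\mathcal{P}$, whence $\underline{t}=0$, as required. I expect the only delicate book-keeping to lie in tracking the two push-out squares and in checking that $\delta$ really is the cokernel of $\beta$ transported by the push-out; the conceptual content sits entirely in this last step, where membership of the induced cokernel term $Q_1$ in $\mathcal{P}$ is precisely what forces $\underline{\beta}$ to be epic.
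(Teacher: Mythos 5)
Your proposal is correct and is essentially the paper's own proof read in the mirror: the paper proves (2) by pulling back the precover deflation along $f$ and showing the induced $\underline{k}$ is a weak kernel that is also a monomorphism (using that the kernel term $P_1$ lies in $\mathcal{P}$), while you prove the dual statement (1) by pushing out the preenvelope inflation along $f$ and showing $\underline{\beta}$ is a weak cokernel that is also an epimorphism (using that the cokernel term $Q_1$ lies in $\mathcal{P}$). Every step of yours, including the factorization $\xi f=\zeta\alpha$ via the preenvelope before invoking the push-out's universal property and the final splitting of $t$ through $P'$ and $Q_1$, is the exact dual of the corresponding step in the paper.
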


\begin{proof}
We only prove (2), and (1) is just the dual. Let $f:X\rightarrow Y$ be an arbitrary morphism in $\mathcal{M}$. By assumption, there is a conflation $0\rightarrow P_1\rightarrow P_0\overset{\beta } \to Y\to 0$ with $P_0, P_1$ $\in$ $\mathcal{P}$ and $\beta :P_0\rightarrow Y$ a $\mathcal{P}$-precover. Consider the following pull-back of $f$ and $\beta $ in $\mathcal{M}$.

$$\xymatrix{
	0\ar[rr]&&P_1\ar@{=} [d] \ar[rr]&&K\ar[d] \ar[rr]^k&&X\ar[rr] \ar[d]^f&&0\\
	0\ar[rr]&&P_1\ar[rr]&&P_0\ar[rr]^\beta &&Y\ar[rr]&&0	
}$$
We claim that \underline{$K$} is the kernel of \underline{$f$}. Assume that there is a morphism $g:U\rightarrow X$ in $\mathcal{M}$ such that \underline{$f$} \underline{$g$}$=0$ in $\mathcal{M}$/$\mathcal{P}$, then there exists an object $P$ in $\mathcal{P}$ such that $fg$ factors through $P$. Since $\beta :P_0\rightarrow Y$ is $\mathcal{P}$-precover, thus $f$$g$ factors through $P_0$. By the universal property of pull-back, we get that $g$ factors through $K$, hence \underline{$K$} is a weak kernel of \underline{$f$}.

It remains to show that \underline{$k$} is a monomorphism in $\mathcal{M}$/$\mathcal{P}$. Let $h:V\rightarrow K$ be in $\mathcal{M}$, and assume that $\underline{kh}=0$. Then there exist an object $P^{\prime}$$\in$$\mathcal{P}$, and morphisms $i:V\rightarrow P^{\prime}$ and $j:P^{\prime}\rightarrow X$ such that $kh=ji$. Since $\beta :P_0\rightarrow Y$ is a $\mathcal{P}$-precover, there is a morphism $l:P^{\prime}\rightarrow P_0$ such that $fj=\beta l$. We have the following commutative diagram in $\mathcal{M}$.
$$\xymatrix{
 	&&&&V\ar[d]_h\ar@{.>}[rr]^i&&P^{\prime}\ar@{.>}[d]^j\ar@{.>}[dll]_n\ar@{.>}[ddll]^l\\
 	0\ar[rr]&&P_1\ar@{=} [d] \ar[rr]&&K\ar[d] \ar[rr]^k&&X\ar[rr] \ar[d]^f&&0\\
 	0\ar[rr]&&P_1\ar[rr]&&P_0\ar[rr]^\beta &&Y\ar[rr]&&0
 }$$
By the universal property of pull-back, there is a unique morphism $n:P^{\prime}\rightarrow K$ such that $j=kn$. Then we get that $k(ni-h)=0$. As a consequence, the morphism $ni-h$ factors through $P_1$, thus $\underline{ni-h}=0$. It implies that $\underline{h}=0$ and $\underline{k}$ is a monomorphism in $\mathcal{M}$/$\mathcal{P}$. Therefore, $\underline{k}: \underline{K}\to \underline{X}$ is the kernel of $\underline{f}$ in $\mathcal{M}$/$\mathcal{P}$.
\end{proof}

\begin{lem}\label{ker-coker}
Let $\mathcal{M}$ be an exact category and $\mathcal{P}$ a full subcategory of $\mathcal{M}$.

\begin{enumerate}
\item[(1)] If $\mathcal{P}$ satisfies condition $\blacktriangle$, then for any conflation $0\rightarrow X\stackrel{f} {\longrightarrow} Y\stackrel{g} {\longrightarrow} Z\rightarrow 0$ which is $\Hom_{\mathcal{M}}(-,\mathcal{P})$-exact, we have that $\underline{Z}$ is the cokernel of $\underline{f}$.
\item[(2)] If $\mathcal{P}$ satisfies condition $\blacktriangledown$, then for any conflation $0\rightarrow X\stackrel{f} {\longrightarrow} Y\stackrel{g} {\longrightarrow} Z\rightarrow 0$ which is $\Hom_{\mathcal{M}}(\mathcal{P},-)$-exact, we have that $\underline{X}$ is the kernel of $\underline{g}$.
\end{enumerate}
\end{lem}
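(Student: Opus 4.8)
The plan is to prove (2) and obtain (1) by the evident dualisation (replacing $\Hom_{\mathcal{M}}(\mathcal{P},-)$-exactness by $\Hom_{\mathcal{M}}(-,\mathcal{P})$-exactness, pull-backs by push-outs, and condition $\blacktriangledown$ by $\blacktriangle$). Fix a $\Hom_{\mathcal{M}}(\mathcal{P},-)$-exact conflation $0\to X\xrightarrow{f}Y\xrightarrow{g}Z\to 0$. Since $gf=0$ we have $\underline{g}\,\underline{f}=0$, so the content is the universal property of the kernel. Rather than verify this by hand from scratch, I would compare $\underline{f}$ with the kernel already produced in Proposition~\ref{ker-cok exist}(2) and show that the comparison map is invertible in $\mathcal{M}/\mathcal{P}$.

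Concretely, using condition $\blacktriangledown$ applied to $Z$, I would choose a conflation $0\to P_{1}\to P_{0}\xrightarrow{\beta}Z\to 0$ with $P_{0},P_{1}\in\mathcal{P}$ and $\beta$ a $\mathcal{P}$-precover, and form the pull-back of $g$ and $\beta$. As in the proof of Proposition~\ref{ker-cok exist}(2) this produces $\underline{k}\colon\underline{K}\to\underline{Y}$, the kernel of $\underline{g}$. The same pull-back square carries a second conflation $0\to X\xrightarrow{\theta}K\xrightarrow{\pi}P_{0}\to 0$, whose kernel term is $\Ker g=X$ and whose deflation $\pi$ is the pull-back of $g$ along $\beta$; one normalises $\theta$ so that $k\theta=f$, whence $\underline{k}\,\underline{\theta}=\underline{f}$. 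Thus the whole statement reduces to proving that $\underline{\theta}$ is an isomorphism in $\mathcal{M}/\mathcal{P}$.

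The crux is that the conflation $0\to X\xrightarrow{\theta}K\xrightarrow{\pi}P_{0}\to 0$ splits. Indeed $\pi$ admits a section if and only if $\beta\colon P_{0}\to Z$ lifts through $g$; since $P_{0}\in\mathcal{P}$ and the original conflation is $\Hom_{\mathcal{M}}(\mathcal{P},-)$-exact, the map $\Hom_{\mathcal{M}}(P_{0},Y)\to\Hom_{\mathcal{M}}(P_{0},Z)$ is surjective, so a lift $s$ with $gs=\beta$ exists and induces the desired section. Hence $K\cong X\oplus P_{0}$ with $P_{0}\in\mathcal{P}$, so $\underline{\theta}$ is an isomorphism (this is also immediate from Lemma~\ref{iso}), and therefore $\underline{f}=\underline{k}\,\underline{\theta}$ is again a kernel of $\underline{g}$, i.e. $\underline{X}$ is the kernel of $\underline{g}$. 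I expect this splitting step to be the only real obstacle: the mere \emph{weak} kernel (factorisation) property can be obtained more cheaply, by writing any $h$ with $\underline{gh}=0$ as $gh=\beta'\alpha'$ through an object of $\mathcal{P}$, lifting $\beta'$ along $g$ via $\Hom_{\mathcal{M}}(\mathcal{P},-)$-exactness, and factoring the corrected map $h-\tilde\beta\alpha'$ through $f=\Ker g$; but promoting ``weak kernel'' to ``kernel'' — equivalently, the monomorphy of $\underline{f}$ — is exactly what the exactness hypothesis is needed for, and the splitting above is the clean way to extract it.
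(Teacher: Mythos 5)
Your proposal is correct and, up to duality, is exactly the paper's argument: the paper proves (1) by pushing $f$ out along a $\mathcal{P}$-preenvelope $X\to Q_0$, showing the induced conflation $0\to Q_0\to C\to Z\to 0$ is $\Hom_{\mathcal{M}}(-,\mathcal{P})$-exact and hence split, and then concluding via Proposition \ref{ker-cok exist} and Lemma \ref{iso} --- precisely the dual of your pull-back of $g$ along the precover $\beta\colon P_0\to Z$ and the splitting of $0\to X\xrightarrow{\theta} K\xrightarrow{\pi} P_0\to 0$. Your only cosmetic deviation is producing the section of $\pi$ directly from a lift $s$ with $gs=\beta$, rather than first proving $\Hom_{\mathcal{M}}(\mathcal{P},-)$-exactness of the new conflation and then specializing to the identity of $P_0$ as the paper does.
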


\begin{proof}
For the convenience, we only prove (1), and (2) can be shown by the similar argument. For the object $X$ in $\mathcal{M}$, there exists a conflation $0\rightarrow X\overset{\beta } \to Q_0\stackrel{} {\longrightarrow} Q_1\rightarrow 0$, with $Q_0$, $Q_1\in\mathcal{P}$ and $\beta :X\to Q_0$ a $\mathcal{P}$-preenvelope. Consider the following push-out diagram of $f:X\rightarrow Y$ and $\beta :X\rightarrow Q_0$ in $\mathcal{M}$.

	$$\xymatrix{
		&&0\ar[d]&&0\ar[d]\\
		0\ar[rr]&&X\ar[d]_{\beta }\ar[rr]^f&&Y\ar[rr]^g\ar[d]\ar@{.>}[ddrr]^j&&Z\ar@{=}[d]\ar[rr]&&0\\
		0\ar[rr]&&Q_0\ar[d]\ar[drrrr]^{i\ \ \ \ }\ar[rr]^r&&C\ar[d]\ar@{.>}[drr]\ar[rr]&&Z\ar[rr]&&0\\
		&&Q_1\ar@{=}[rr]\ar[d]&&Q_1\ar[d]&&Q\\
		&&0&&0
	}$$
Assume that $Q$ is an arbitary object in $\mathcal{P}$, and $i:Q_0\rightarrow Q$ an arbitrary morphism. Since the conflation $0\rightarrow X\stackrel{f} {\longrightarrow} Y\stackrel{g} {\longrightarrow} Z\rightarrow 0$ is $\Hom_{\mathcal{M}}(-,\mathcal{P})$-exact, thus there is a morphism $j:Y\rightarrow Q$ such that $i\beta =jf$.
By the universal property of push-out, there is a unique morphism $C\rightarrow Q$ such that $i$ factors through $r:Q_0\rightarrow C$. This shows that the second row is $\Hom_{\mathcal{M}}(-,\mathcal{P})$-exact. Since $Q_0\in \mathcal{P}$, we infer that the second row splits.
Therefore we get that $C\cong Q_0\oplus{Z}$ in $\mathcal{M}$, and hence $\underline{j}:\underline{C} \to \underline{Z}$ is an isomorphism in $\mathcal{M}$/$\mathcal{P}$ by Lemma \ref{iso}. By Proposition \ref{ker-cok exist}, $\underline{g}:\underline{Y}\to \underline{Z}$ is the cokernel of $\underline{f}$.
\end{proof}

\begin{thm} \label{Th semi-abel}
Let $\mathcal{M}$ be an exact category and $\mathcal{P}$ a pseudo-cluster tilting subcategory of $\mathcal{M}$. Then $\mathcal{M}$/$\mathcal{P}$ is a semi-abelian category.
\end{thm}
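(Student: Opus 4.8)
The plan is to verify the definition of a semi-abelian category directly (Definition \ref{Definition 2.1}): for every morphism of $\mathcal{M}/\mathcal{P}$, which I continue to write as $\underline{f}\colon\underline{X}\to\underline{Y}$, I must show that the canonical comparison $\widehat{\underline{f}}\colon\mathrm{Coim}\,\underline{f}\to\mathrm{Im}\,\underline{f}$ is simultaneously a monomorphism and an epimorphism. The first thing to record is that, since $\mathcal{P}$ is pseudo-cluster tilting, it satisfies both $\blacktriangle$ and $\blacktriangledown$, so Proposition \ref{ker-cok exist} shows that $\mathcal{M}/\mathcal{P}$ has all kernels and all cokernels; in particular the factorization $\underline{X}\xrightarrow{e}\mathrm{Coim}\,\underline{f}\xrightarrow{\widehat{\underline{f}}}\mathrm{Im}\,\underline{f}\xrightarrow{m}\underline{Y}$ exists, where $e$ (the cokernel of the kernel of $\underline{f}$) is an epimorphism and $m$ (the kernel of the cokernel of $\underline{f}$) is a monomorphism.

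The next step is a reduction. Setting $\bar{f}:=\widehat{\underline{f}}\,e$ and using that $e$ is epi, one sees in the usual way that $\widehat{\underline{f}}$ is an epimorphism if and only if $\bar{f}$ is; dually $\widehat{\underline{f}}$ is a monomorphism if and only if $m\,\widehat{\underline{f}}$ is. Since the conditions $\blacktriangle$ and $\blacktriangledown$ defining a pseudo-cluster tilting subcategory are interchanged by passage to $\mathcal{M}^{\mathrm{op}}$ --- under which $\mathrm{Coim}$ and $\mathrm{Im}$ are also interchanged and epimorphisms become monomorphisms --- it suffices to prove, for an arbitrary pseudo-cluster tilting subcategory, that $\widehat{\underline{f}}$ is an epimorphism; applying this in $\mathcal{M}^{\mathrm{op}}$ then yields that $\widehat{\underline{f}}$ is a monomorphism.

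To attack the epimorphism claim I would make both the coimage and the image explicit. On the cokernel side, Lemma \ref{Lemma 2.5}(1) produces a $\Hom_{\mathcal{M}}(-,\mathcal{P})$-exact conflation $0\to X\to Y\oplus Q\to Z\to 0$, and Lemma \ref{ker-coker}(1) identifies $\mathrm{Coker}\,\underline{f}$ with $\underline{Z}$; reading off the deflation identifies the cokernel map $c$ of $\underline{f}$ with the component $\underline{z_1}\colon\underline{Y}\to\underline{Z}$. On the image side I use $\blacktriangledown$ to take a $\mathcal{P}$-precover deflation $\rho\colon P_0\to Z$ and pull it back along $z_1$ to a map $\lambda\colon I'\to Y$; by Proposition \ref{ker-cok exist}(2) this realizes $\mathrm{Im}\,\underline{f}=\mathrm{Ker}\,c$ as $\underline{I'}$ with $m=\underline{\lambda}$. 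Because $c\,\underline{f}=\underline{z_1}\,\underline{f}=0$, the morphism $z_1f$ factors through $\mathcal{P}$ and hence, by the precover property of $\rho$, through $\rho$; the universal property of the pull-back then lifts $f$ to a genuine morphism $X\to I'$ over $Y$, which represents $\bar{f}$. It remains to check that this $\bar{f}$ is an epimorphism in $\mathcal{M}/\mathcal{P}$, i.e. that any $\underline{u}$ with $\underline{u}\,\bar{f}=0$ vanishes; here one feeds the hypothesis $\underline{u}\,\bar{f}=0$ into the precover property together with Lemma \ref{iso} to force $\underline{u}$ to factor through $\mathcal{P}$.

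The main obstacle is precisely this last verification, namely reconciling the coimage --- a cokernel built from a push-out against a $\mathcal{P}$-preenvelope --- with the image --- a kernel built from a pull-back against a $\mathcal{P}$-precover --- and proving the comparison between them is an epimorphism (dually a monomorphism). This is the step in which $\blacktriangle$ and $\blacktriangledown$ are genuinely used, and it is also exactly where the argument stops: without a self-orthogonality hypothesis one cannot upgrade $\widehat{\underline{f}}$ to an isomorphism, so one obtains only that it is regular in the sense of Remark \ref{rmk 2.2}, which is the assertion that $\mathcal{M}/\mathcal{P}$ is semi-abelian rather than abelian (cf. Theorem \ref{theorem 1.1}).
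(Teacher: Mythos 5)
There is a genuine gap, and it sits exactly at the decisive step. Your setup is sound and in fact mirrors the paper's: the cokernel side via Lemma \ref{Lemma 2.5}(1) and Lemma \ref{ker-coker}(1) is the paper's construction verbatim; your pullback $I'=Y\times_Z P_0$ is, up to a direct summand in $\mathcal{P}$, the paper's object $X'$ (the kernel of the deflation $Y\oplus Q\oplus P_0\to Z$ is isomorphic to $I'\oplus Q$, since the precover property lets you split off $Q$); and the duality reduction is legitimate, because $\blacktriangle$ and $\blacktriangledown$ are interchanged under passage to $\mathcal{M}^{\mathrm{op}}$ and $(\mathcal{M}/\mathcal{P})^{\mathrm{op}}\simeq\mathcal{M}^{\mathrm{op}}/\mathcal{P}^{\mathrm{op}}$. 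But the claim that $\bar{f}$ is an epimorphism is never proved: the sentence ``feed $\underline{u}\,\bar{f}=0$ into the precover property together with Lemma \ref{iso}'' is not an argument, and Lemma \ref{iso} is not the relevant tool here. Concretely, from $\underline{u}\,\bar{f}=0$ you only obtain a factorization $u\bar{f}_0=ba$ through some $P'\in\mathcal{P}$ \emph{in} $\mathcal{M}$, and since $\bar{f}_0\colon X\to I'$ is a priori neither an inflation nor a deflation, there is no exactness around it that transports this into a factorization of $u$ itself through $\mathcal{P}$. This is the one place where $\blacktriangle$ and $\blacktriangledown$ must interact, and your sketch supplies no mechanism for it.

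What closes the gap is precisely the content of the paper's Fig.~3.1. First, the map $X\to I'\oplus Q$ built from the lift $\bar{f}_0$ together with the preenvelope inflation $\alpha\colon X\to Q$ is an inflation sitting in a conflation $0\to X\to I'\oplus Q\to P_0\to 0$ with cokernel $P_0\in\mathcal{P}$ (the first column of the paper's $3\times 3$ diagram). Second, this conflation is $\Hom_{\mathcal{M}}(-,\mathcal{P})$-exact: any $h\colon X\to Q'$ with $Q'\in\mathcal{P}$ extends along $X\to Y\oplus Q$ because the row is $\Hom_{\mathcal{M}}(-,\mathcal{P})$-exact, and then along $Y\oplus Q\to Y\oplus Q\oplus P_0$ because the middle column \emph{splits}. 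With these two facts you can either invoke Lemma \ref{ker-coker}(1) to get $\mathrm{Coker}\,\underline{\bar{f}_0}=\underline{P_0}=0$, hence $\bar{f}$ epi, or finish your own $\underline{u}$-argument: extend $a\colon X\to P'$ along the inflation, subtract the resulting map from $(u,0)$, and use that the deflation $I'\oplus Q\to P_0$ is the honest cokernel of the inflation in $\mathcal{M}$ to factor the corrected morphism through $P_0\in\mathcal{P}$. Without identifying this conflation and its $\Hom_{\mathcal{M}}(-,\mathcal{P})$-exactness your proof does not go through; with them, it becomes a coordinate version of the paper's proof.
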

\begin{proof}
Let $f:X\to Y$ be in $\mathcal{M}$. By Lemma \ref{Lemma 2.5}, we have a conflation  $0\rightarrow X\to  Y\oplus Q \rightarrow Z\rightarrow 0$ with $Q\in$ $\mathcal{P}$, which is $\Hom_{\mathcal{M}}(-,\mathcal{P})$-exact. By Lemma \ref{ker-coker}, we know that $\underline{Z}$ is the cokernel of $\underline{f}$. We can also get a conflation $0\rightarrow X^{\prime}\rightarrow Y\oplus Q \oplus P\rightarrow Z\rightarrow 0$ with $P \in \mathcal{P}$, which is $\Hom_{\mathcal{M}}(\mathcal{P},-)$-exact, then $\underline{X^{\prime}}$ is the kernel of $c_{\underline{f}}$. Let $i:Y\oplus{Q}\rightarrow Y\oplus{Q}\oplus{P}$ be the canonical injection. We obtain the following commutative diagram.

$$\xymatrix{
		&&0\ar[d]&&0\ar[d]\\
		0\ar[rr]&&X\ar[d]^g\ar[rr]\ar@{.>}[ddll]_h&&Y\oplus{Q}\ar[rr]^{c_{\underline{f}}}\ar@{.>}[ddllll]^j\ar[d]^i&&Z\ar@{=}[d]\ar[rr]&&0\\
		0\ar[rr]&&X^{\prime}\ar[d]\ar[rr]&&Y\oplus{Q}\oplus{P}\ar[d]\ar[rr]&&Z\ar[rr]&&0\\
		Q^{\prime}&&P\ar@{=}[rr]\ar[d]&&P\ar[d]\\
		&&0&&0&&(\mathrm{Fig.}\ \ 3.1)
}$$

We claim that $\underline{g}$ is an epimorphism in $\mathcal{M}$/$\mathcal{P}$. In fact, for any morphism $h:X\rightarrow Q^{\prime}$ with $Q^{\prime} \in \mathcal{P}$, since the first row is $\Hom_{\mathcal{M}}(-,\mathcal{P})$-exact, there is a morphism $j:Y\oplus{Q}\rightarrow Q^{\prime}$, such that $h$ factors through $X\rightarrow Y\oplus{Q}$. Since the middle column splits, $j$ factors through $i$. This means that $h$ factors through $g$, and hence the first column is $\Hom_{\mathcal{M}}(-,\mathcal{P})$-exact. By Lemma \ref{ker-coker}, $\underline{P}$ is the cokernel of $\underline{g}$ in $\mathcal{M}$/$\mathcal{P}$. We infer that $\underline{g}$ is an epimorphism in $\mathcal{M}$/$\mathcal{P}$ due to the fact that $P\in\mathcal{P}$.

Since $\mathcal{M}$/$\mathcal{P}$ has both kernels and cokernels, we then obtain the following commutative diagram in $\mathcal{M}$/$\mathcal{P}$.
\vspace{0.4cm}
$$\xymatrix{
		\mathrm{Ker}\underline{f}\ar[rr]^{k_{\underline{f}}}&&\underline{X}\ar[d]_{c_{k_{\underline{f}}}}\ar[rr]^{\underline{f}}\ar@{.>}[drr]^{\underline{g}}&&\underline{Y}\ar[rr]^{c_{\underline{f}}}&&\underline{Z}\\
		&&\mathrm{Coim}\underline{f}\ar[rr]^{\underline{\hat{f}}}&&\underline{X^{\prime}}\ar[u]_{k_{c_{\underline{f}}}}
}$$

\vspace{0.4cm}
It can be seen that the canonical morphism $\underline{\hat{f}}$ is epimorphism. Dually, we can show that $\underline{\hat{f}}$ is a monomorphism. Therefore, $\mathcal{M}$/$\mathcal{P}$ is a semi-abelian category as desired.
\end{proof}

In the rest of this section, we test the abelianness of the quotient $\mathcal{M}$/$\mathcal{P}$. Since we have proven that $\mathcal{M}$/$\mathcal{P}$ is always a semi-abelian category, we only need to show that any regular morphism in $\mathcal{M}$/$\mathcal{P}$ must be an isomorphism. Note that if $\mathcal{P}$ is a cluster tilting subcategory, i.e., $\mathcal{P}$ is self-orthogonal with respect to all conflations, then $\mathcal{M}$/$\mathcal{P}$ is always an abelian category by \cite{DL}. In the following, we introduce two special classes of conflations $\mathcal{S}$ and $\mathcal{T}$.

A conflation $0\rightarrow S_1\rightarrow S_2\rightarrow S_3\rightarrow 0$ belongs to $\mathcal{S}$, if it appears in the following commutative diagram, where the conflation in the second column is $\Hom_{\mathcal{M}}(\mathcal{P},-)$-exact and the one in the third row is $\Hom_{\mathcal{M}}(-,\mathcal{P})$-exact.

\vspace{0.5cm}
$$\xymatrix{
 	&&0\ar[d]&&0\ar[d]\\
 	&&U\ar[d]\ar@{=}[rr]&&U\ar[d]\\
 	0\ar[rr]&&S_1\ar[d]\ar[rr]&&S_2\ar[d]\ar[rr]&&S_3\ar@{=}[d]\ar[rr]&&0\\
 	0\ar[rr]&&M_1\ar[d]\ar[rr]&&M_2\ar[d]\ar[rr]&&S_3\ar[rr]&&0\\
 	&&0&&0
 }$$

\vspace{0.5cm}
Dually, a conflation $0\rightarrow T_1\rightarrow T_2\rightarrow T_3\rightarrow 0$ belongs to $\mathcal{T}$, if it appears in the following commutative diagram, where the conflation in the second column is $\Hom_{\mathcal{M}}(-,\mathcal{P})$-exact, and the one in the first row is $\Hom_{\mathcal{M}}(\mathcal{P},-)$-exact.

$$\xymatrix{
	&&&&0\ar[d]&&0\ar[d]\\
	0\ar[rr]&&T_1\ar@{=}[d]\ar[rr]&&N_2\ar[d]\ar[rr]&&N_3\ar[d]\ar[rr]&&0\\
	0\ar[rr]&&T_1\ar[rr]&&T_2\ar[d]\ar[rr]&&T_3\ar[d]\ar[rr]&&0\\
	&&&&V\ar@{=}[rr]\ar[d]&&V\ar[d]\\
	&&&&0&&0
}$$

\begin{rmk} \label{rmk S and T}The readers can check that the following two assertions hold.
\begin{enumerate}
\item[(1)] A conflation which is either $\Hom_{\mathcal{M}}(\mathcal{P},-)$-exact or $\Hom_{\mathcal{M}}(-,\mathcal{P})$-exact belongs to $\mathcal{S}$;
\item[(2)] A conflation which is either $\Hom_{\mathcal{M}}(\mathcal{P},-)$-exact or $\Hom_{\mathcal{M}}(-,\mathcal{P})$-exact belongs to $\mathcal{T}$.
\end{enumerate}
\end{rmk}

\begin{thm} \label{main Th}
Let $\mathcal{M}$ be an exact category and $\mathcal{P}$ a pseudo-cluster tilting subcategory of $\mathcal{M}$. Then the following are equivalent.

\begin{enumerate}
\item[(1)] $\mathcal{M}$/$\mathcal{P}$ is an abelian category;
\item[(2)] The subcategory $\mathcal{P}$ is self-orthogonal with respect to $\mathcal{S}$;
\item[(3)] The subcategory $\mathcal{P}$ is self orthogonal with respect to $\mathcal{T}$.
\end{enumerate}
\end{thm}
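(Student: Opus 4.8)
The plan is to exploit that $\mathcal M/\mathcal P$ is already semi-abelian (Theorem \ref{Th semi-abel}), so by Remark \ref{rmk 2.2} it is abelian if and only if every regular morphism (one that is at once monic and epic) is an isomorphism. Accordingly each implication will translate between regular morphisms of $\mathcal M/\mathcal P$ and conflations of $\mathcal S$ (resp.\ $\mathcal T$) whose two outer terms lie in $\mathcal P$; here I read self-orthogonality with respect to $\mathcal S$ as the statement that every conflation in $\mathcal S$ with both end terms in $\mathcal P$ splits. The technical heart, which I would isolate as a lemma, is this: if $\epsilon\colon 0\to A\overset{\phi}\to B\overset{q}\to C\to 0$ is $\Hom_{\mathcal M}(-,\mathcal P)$-exact with $C\in\mathcal P$, then $\underline{\phi}$ is an isomorphism in $\mathcal M/\mathcal P$ if and only if $\epsilon$ splits. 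The backward direction is immediate from Lemma \ref{iso} (as $B\cong A\oplus C$ with $C\in\mathcal P$). For the forward direction I would write the quotient-isomorphism in the matrix form of Remark \ref{rmk 2.4}, read off the relation $\id_A=u\phi+u'c$ with $c\colon A\to Q$, $u'\colon Q\to A$, $Q\in\mathcal P$, observe that the defect $u'c$ factors through $Q\in\mathcal P$, and then use $\Hom_{\mathcal M}(-,\mathcal P)$-exactness of $\epsilon$ to extend $c$ along $\phi$ to some $\bar c\colon B\to Q$; the map $u+u'\bar c$ is then an honest retraction of $\phi$, so $\epsilon$ splits.

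For $(2)\Rightarrow(1)$ I take a regular $\underline{\phi}\colon\underline{A}\to\underline{B}$. Epicness, via Lemma \ref{Lemma 2.5}(1) and Lemma \ref{ker-coker}(1), lets me replace $B$ by $B\oplus Q$ and assume $\underline{\phi}$ is represented by a $\Hom_{\mathcal M}(-,\mathcal P)$-exact conflation $0\to A\overset{\phi}\to B\to C\to 0$ with $C\in\mathcal P$. Monicness, computed through the pullback of a $\mathcal P$-precover $P_0\overset{\beta}\to B$ as in Proposition \ref{ker-cok exist}(2), forces the kernel object $S_1:=A\times_B P_0$ into $\mathcal P$. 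The resulting $3\times3$ diagram exhibits $\sigma\colon 0\to S_1\to P_0\to C\to 0$ as a conflation in $\mathcal S$ with $S_1,C\in\mathcal P$; by $(2)$ it splits, hence so does its pushout $\epsilon\colon 0\to A\to B\to C\to 0$ along $S_1\to A$, and the lemma yields that $\underline{\phi}$ is an isomorphism. Conversely, for $(1)\Rightarrow(2)$ I begin with $\sigma\colon 0\to S_1\to S_2\to S_3\to 0$ in $\mathcal S$, $S_1,S_3\in\mathcal P$, and take the bottom-row inflation $\phi\colon M_1\to M_2$ of its defining diagram. Since that row is $\Hom_{\mathcal M}(-,\mathcal P)$-exact with $S_3\in\mathcal P$, Lemma \ref{ker-coker}(1) makes $\underline{\phi}$ epic; since the middle column is $\Hom_{\mathcal M}(\mathcal P,-)$-exact with $S_1\in\mathcal P$, any $g$ with $\phi g$ factoring through $\mathcal P$ factors (through the pullback $S_1=M_1\times_{M_2}S_2$) through $S_1\in\mathcal P$, so $\underline{\phi}$ is monic. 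Thus $\underline{\phi}$ is regular, hence an isomorphism by $(1)$; the lemma splits the bottom row $\epsilon$, and as $\sigma$ is the pullback of $\epsilon$ along the deflation $S_2\to M_2$, $\sigma$ splits as well.

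Finally I would obtain $(1)\Leftrightarrow(3)$ by duality rather than by repeating the argument: passing to $(\mathcal M^{\mathrm{op}},\mathcal E^{\mathrm{op}})$ interchanges conditions $\blacktriangle$ and $\blacktriangledown$ (so $\mathcal P^{\mathrm{op}}$ is again pseudo-cluster tilting), interchanges $\Hom_{\mathcal M}(\mathcal P,-)$-exactness with $\Hom_{\mathcal M}(-,\mathcal P)$-exactness, and carries the defining diagram of $\mathcal S$ to that of $\mathcal T$, while $\mathcal M^{\mathrm{op}}/\mathcal P^{\mathrm{op}}=(\mathcal M/\mathcal P)^{\mathrm{op}}$ is abelian iff $\mathcal M/\mathcal P$ is; hence $(1)\Leftrightarrow(2)$ applied to $\mathcal M^{\mathrm{op}}$ reads off as $(1)\Leftrightarrow(3)$ for $\mathcal M$. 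The main obstacle I anticipate is the forward direction of the lemma together with the two regularity checks: one must turn the abstract ``isomorphism modulo $\mathcal P$'' data of Lemma \ref{iso} into a genuine retraction in $\mathcal M$, and must verify that the morphisms manufactured in each direction are simultaneously monic and epic and that the auxiliary conflations really fit the $3\times3$ pattern defining $\mathcal S$ (with Remark \ref{rmk S and T} placing the exact conflations inside $\mathcal S$). The transfer of splitting between $\sigma$ and the bottom row $\epsilon$ (pullback/pushout of a split conflation is split) is routine but should be carried out with care.
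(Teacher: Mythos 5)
Most of your argument is sound and in fact tracks the paper's own proof closely: your key lemma is exactly the matrix computation the paper performs inside $(1)\Rightarrow(2)$ (the relation $m_1m+b_1c=1$ from Lemma \ref{iso}, plus $\Hom_{\mathcal{M}}(-,\mathcal{P})$-exactness turning the off-diagonal defect into a genuine retraction), your $(2)\Rightarrow(1)$ builds the same $3\times 3$ diagram exhibiting a conflation of $\mathcal{S}$ with both ends in $\mathcal{P}$, and the paper also disposes of (3) by duality. The one genuine defect is the very last step of your $(1)\Rightarrow(2)$: you conclude that $\sigma$ splits because it is ``the pullback of $\epsilon$ along the deflation $v\colon S_2\to M_2$'' and pullback of a split conflation is split. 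That transfer principle holds for pullback along a morphism into the \emph{end} term $S_3$ (and for pushout along a morphism out of the first term, which is why the corresponding step in your $(2)\Rightarrow(1)$ is correct), but it fails for pulling back the inflation along a map into the \emph{middle} term: pulling back the split conflation $0\to 0\to C\to C\to 0$ along any non-split deflation $v\colon E\to C$ returns $0\to \Ker v\to E\to C\to 0$, which does not split. Concretely, a retraction $r$ of $\phi\colon M_1\to M_2$ gives no induced map $S_2\to S_1=M_1\times_{M_2}S_2$, since the required compatibility $\phi r v=v$ fails in general; likewise the section $s\colon S_3\to M_2$ of $q_\epsilon$ does not automatically lift through $v$.

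The gap is local and fixable with data you already have in hand, and the fix is exactly what the paper does. Since $\sigma\in\mathcal{S}$, the middle column $0\to U\to S_2\stackrel{v}{\to} M_2\to 0$ is $\Hom_{\mathcal{M}}(\mathcal{P},-)$-exact, and $S_3\in\mathcal{P}$; hence the section $s$ lifts to $\tilde s\colon S_3\to S_2$ with $v\tilde s=s$, and then $q_\sigma\tilde s=q_\epsilon v\tilde s=1_{S_3}$, so $\sigma$ splits. (The paper phrases this as: $\Hom_{\mathcal{M}}(\mathcal{P},p)$ is surjective, being the composite of the surjections $\Hom_{\mathcal{M}}(\mathcal{P},q)$ and $\Hom_{\mathcal{M}}(\mathcal{P},r)$, and evaluates at $1_{S_3}$.) With that one repair your proof goes through; indeed your reorganization is marginally cleaner than the paper's in two places: isolating the retraction lemma as a standalone statement, and in $(2)\Rightarrow(1)$ proving directly that every regular morphism is invertible (using epicness to force the cokernel object $C$ into $\mathcal{P}$), where the paper instead shows every monomorphism is a kernel and then cites Mac Lane.
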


\begin{proof}
We only prove that (1) and (2) are equivalent, the equivalence of (1) and (3) is the dual.

(2)$\Rightarrow$(1) We will show that a regular morphism in $\mathcal{M}$/$\mathcal{P}$ must be an isomorphism. For convenience, we keep the notation from the proof of Theorem \ref{Th semi-abel}. Let $\underline{f}:\underline{X}\rightarrow \underline{Y}$ be a monomorphism in $\mathcal{M}$/$\mathcal{P}$ induced by $f:X \to Y$ in $\mathcal{M}$. We then obtain the conflation
$0 \to X\overset{g}\to X'\to P\to 0$ with $P\in \mathcal{P}$, which is $\Hom_{\mathcal{M}}(-,\mathcal{P})$-exact as in Fig. 3.1. For the object $X^{\prime}$, there exists a conflation $0\rightarrow P_1\rightarrow P_0\rightarrow X^{\prime}\rightarrow 0$ with $P_0$, $P_1 \in \mathcal{P}$, which is $\Hom_{\mathcal{M}}(\mathcal{P},-)$-exact. Consider the following commutative diagram.

$$\xymatrix{
 	&&0\ar[d]&&0\ar[d]\\
 	&&P_1\ar[d]\ar@{=}[rr]&&P_1\ar[d]\\
 	0\ar[rr]&&K\ar[d]\ar[rr]&&P_0\ar[d]\ar[rr]&&P\ar@{=}[d]\ar[rr]&&0\\
 	0\ar[rr]&&X\ar[d]\ar[rr]^{g}&&X'\ar[d]\ar[rr]&&P\ar[rr]&&0\\
 	&&0&&0
 }$$
It is shown that the conflation $0\rightarrow K\rightarrow P_0\rightarrow P\rightarrow0$ belongs to $\mathcal{S}$. Consider the following commutative diagram in $\mathcal{M}$ from Fig. 3.1.

$$\xymatrix{
	X\ar[r]^{f\ \ }\ar[d]_g &Y\oplus Q\ar[d]_{i}\\
	X'\ar[r]&Y\oplus Q\oplus P
}$$
It induces the following commutative diagram in the semi-abelian category $\mathcal{M}$/$\mathcal{P}$.
$$\xymatrix{
	\underline{X}\ar[r]^{\underline{f}\ \ }\ar[d]_{\underline{g}} &\underline{Y}\oplus \underline{Q}\ar[d]_{\underline{i}}\\
	\underline{X'}\ar[r]^{k_{c_{\underline{f}}}\ \ \ \ }&\underline{Y}\oplus \underline{Q}\oplus \underline{P}
}$$
Since $\underline{i}$ is an isomorphism by Lemma \ref{iso} and $\underline{f}$ is a monomorphism, we infer that $\underline{g}$ is a monomorphism.
This implies that $K$ belongs to $\mathcal{P}$, since $\underline{K}$ is the kernel of $\underline{g}$ by Proposition \ref{ker-cok exist}. We have that the conflation $0\rightarrow K\rightarrow P_0\rightarrow P\rightarrow 0$ splits. As a consequence, $0 \to X\overset{g}\to X'\to P\to 0$ also splits. Hence in the above diagram, the two columns are all isomorphisms in $\mathcal{M}$/$\mathcal{P}$, hence $\underline{f}$ is a kernel. This means any monomorphism in $\mathcal{M}$/$\mathcal{P}$ must be a kernel. Now the remains are well known, see \cite[page 199]{M}.

(1)$\Rightarrow$(2) Let $0\rightarrow P\rightarrow X\rightarrow Q\rightarrow 0$ be a conflation with $P,\ Q\in\mathcal{P}$ belongs to $\mathcal{S}$. Then we have the following commutative diagram
$$\xymatrix{
		&&0\ar[d]&&0\ar[d]\\
	&&U\ar[d]\ar@{=}[rr]&&U\ar[d]\\
	0\ar[rr]&&P\ar[d]\ar[rr]&&X\ar[d]^r\ar[rr]^p&&Q\ar@{=}[d]\ar[rr]&&0\\
	0\ar[rr]&&M_1\ar[d]\ar[rr]^m&&M_2\ar[d]\ar[rr]^q&&Q\ar[rr]&&0\\
	&&0&&0
}$$
where the third row is $\Hom_{\mathcal{M}}(-,\mathcal{P})$-exact. We get that $\underline{m}:\underline{M_1}\rightarrow \underline{M_2}$ is an epimorphism by Lemma \ref{ker-coker}. Since the second column is $\Hom_{\mathcal{M}}(\mathcal{P},-)$-exact, it can be checked that $P$ is a weak kernel of $\underline{m}$ in $\mathcal{M}$/$\mathcal{P}$, hence $\underline{m}$ is a monomorphism. By the assumption, $\mathcal{M}$/$\mathcal{P}$ is an abelian category, it shows that $\underline{m}$ is an isomorphism.

We have the following commutative diagram in $\mathcal{M}$ with $P',\ P''\in \mathcal{P}$ by lemma \ref{iso}.

$$\xymatrix{
  M_{1} \ar[d]_{i} \ar[r]^{m}
               & M_{2}    \\
  M_{1}\oplus P' \ar[r]^{\widetilde{m}}
                & M_{2}\oplus P''  \ar[u]_{p}
          }
$$
Since $\widetilde{m}=\left({\begin{array}{*{10}{c}}
	{{m}}&{{b}}\\
	{{c}}&{{d}}\\
	\end{array}} \right)$ is an isomorphism. Write its inverse $\widetilde{m}^{-1}=\left({\begin{array}{*{10}{c}}
	{{m_{1}}}&{{b_{1}}}\\
	{{c}_{1}}&{{d}_{1}}\\
	\end{array}} \right)$.
We have that $m_{1}m+b_{1}c=1$.
Consider the morphism $c:M_1\to P''$, since $0 \to M_1\overset{m}\to M_2\to Q\to 0$ is $\Hom_{\mathcal{M}}(-,\mathcal{P})$-exact, there exists
$v:M_2\to P''$ such that $c=vm$. Hence $m$ is a section and $0 \to M_1\overset{m}\to M_2\to Q\to 0$ splits.
It can be seen that $\Hom_{\mathcal{M}}(\mathcal{P},p)=\Hom_{\mathcal{M}}(\mathcal{P},r)\Hom_{\mathcal{M}}(\mathcal{P},q)$ is an epimorphism, this means $0\rightarrow P\rightarrow X\rightarrow Q\rightarrow 0$ splits. Hence the full subcategory $\mathcal{P}$ is self-orthogonal with respect to $\mathcal{S}$.
\end{proof}

\section{The category of conflations revisited}\label{section 4}
Let $(\mathcal{M},\mathcal{E})$ be an exact category with the exact structure $\mathcal{E}$. In this section, we revisit the category of conflations of $\mathcal{M}$, denoted by $\mathcal{E}(\mathcal{M})$. The objects in $\mathcal{E}(\mathcal{M})$ are all conflations of $\mathcal{M}$, we always view a conflation $0\rightarrow X_1\stackrel{x_1}{\longrightarrow}X_2\stackrel{x_2}{\longrightarrow} X_3\rightarrow 0$ as a complex concentrated in degree $-1$,~0 and 1, written as $X^{\bullet}$ for short. A morphism $f^{\bullet}:X^{\bullet}\rightarrow Y^{\bullet}$ is the following commutative diagram.
$$\xymatrix{
	0\ar[r]&X_1\ar[d]_{f_1}\ar[r]^{x_1}&X_2\ar[d]_{f_2}\ar[r]^{x_2}&X_3\ar[d]_{f_3}\ar[r]&0\\
	0\ar[r]&Y_1\ar[r]^{y_1}&Y_2\ar[r]^{y_2}&Y_3\ar[r]&0	
	}$$
It is well known that $\mathcal{E}(\mathcal{M})$ is an additive category. We introduce four exact structures over $\mathcal{E}(\mathcal{M})$. The first one is the usual exact structure computed degree-wise, written as $(\mathcal{E}(\mathcal{M}),\mathcal{E})$. The second one is $(\mathcal{E}(\mathcal{M}),\mathcal{E}_{0}^{-1})$, that is, conflations in $(\mathcal{E}(\mathcal{M}),\mathcal{E})$ splitting in degree 0 and $-1$. Dually, we have $(\mathcal{E}(\mathcal{M}),\mathcal{E}_{0}^{1})$. The last one is $(\mathcal{E}(\mathcal{M}),\mathcal{E}_{0})$, conflations in $(\mathcal{E}(\mathcal{M}),\mathcal{E})$ splitting in degree 0. Then it can be seen that

$$(\mathcal{E}(\mathcal{M}),\mathcal{E}_{0}^{-1})\preceq (\mathcal{E}(\mathcal{M}),\mathcal{E}_{0})\preceq (\mathcal{E}(\mathcal{M}),\mathcal{E})$$

$$(\mathcal{E}(\mathcal{M}),\mathcal{E}_{0}^{1})\preceq (\mathcal{E}(\mathcal{M}),\mathcal{E}_{0})\preceq (\mathcal{E}(\mathcal{M}),\mathcal{E})$$
In the following, we always write a conflation $0\rightarrow X_1\stackrel{x_1}{\longrightarrow}X_2\stackrel{x_2}{\longrightarrow} X_3\rightarrow 0$ in $\mathcal{M}$ as $X_1\stackrel{x_1}{\longrightarrow}X_2\stackrel{x_2}{\longrightarrow} X_3$ for simplicity.

Let $\mathcal{S}(\mathcal{M})$ be the full subcategory of $\mathcal{E}(\mathcal{M})$ consisting of all splitting conflations.
For any object $X^{\bullet}:0\rightarrow X_1\stackrel{x_1} {\longrightarrow} X_2\stackrel{x_2} {\longrightarrow} X_3\rightarrow 0$ in $\mathcal{E}(\mathcal{M})$, we obtain the following diagram, where $i$ and $p$ are the canonical injection and projection respectively.
$$\xymatrix{
0\ar[rr]&&0\ar[rr]\ar[d]&&X_1\ar[d]^-{i}\ar@{=}[rr]&&X_1\ar[rr]\ar[d]^{x_1}&&0\\
0\ar[rr]&&X_1\ar@{=}[d]\ar[rr]^-{\begin{pmatrix}1\\-x_1\end{pmatrix}}&&X_1\oplus{X_2}\ar[d]^-{p}\ar[rr]^-{\begin{pmatrix}x_1&1\end{pmatrix}}&&X_2\ar[d]^{x_2}\ar[rr]&&0\\
0\ar[rr]&&X_1\ar[rr]^{-x_1}&&X_2\ar[rr]^{x_2}&&X_3\ar[rr]&&0\\
}$$

It can be checked that the first and the second columns in the above diagram are splitting, denoted by $P^{\bullet}_1$ and $P^{\bullet}_0$ respectively. Thus we obtain a conflation $0\rightarrow P^{\bullet}_1\rightarrow P^{\bullet}_0 \overset{\alpha  ^\bullet}\to X^{\bullet}\to 0$ in $(\mathcal{E}(\mathcal{M}),\mathcal{E}_{0}^{-1})\preceq (\mathcal{E}(\mathcal{M}),\mathcal{E})$ with $P^{\bullet}_0,\ P^{\bullet}_1\in \mathcal{S}(\mathcal{M})$. Dually, we get a conflation $0\rightarrow X^{\bullet} \overset{\beta   ^\bullet}\to Q^{\bullet}_0 \rightarrow Q^{\bullet}_1\rightarrow 0$ in $(\mathcal{E}(\mathcal{M}),\mathcal{E}_{0}^{1})\preceq(\mathcal{E}(\mathcal{M}),\mathcal{E})$ with $Q^{\bullet}_0,\ Q^{\bullet}_1\in \mathcal{S}(\mathcal{M})$.

Now let $Y^{\bullet}$ be an arbitrary object in $\mathcal{S}(\mathcal{M})$. Then up to isomorphisms, we may assume that $Y^{\bullet}$ is of the form $0\rightarrow Y_1\stackrel{i} \to Y_1\oplus{Y_2}\stackrel{p} \to Y_2\rightarrow 0$ with $i$ and $p$ the canonical injection and projection respectively. Let $g^{\bullet}:Y^{\bullet}\rightarrow X^{\bullet}$ be arbitrary. Then we have the following diagram.

$$\xymatrix{
&&&&Y_1\ar[dd]^-{i}\ar[dll]^{g_1}\ar@{.>}[dllll]_{g_1}\\
X_1\ar@{=}[rr]\ar[dd]_-{i}&&X_1\ar[dd]\\
&&&&Y_1\oplus{Y_2}\ar[dd]^-{p}\ar[dll]^-{\begin{pmatrix}x_1 g_1&g'\end{pmatrix}}\ar@{.>}[dllll]_-{\begin{pmatrix}g_1&0\\0&g'\end{pmatrix}}\\
X_1\oplus{X_2}\ar[dd]_-{p}\ar[rr]_-{\begin{pmatrix}x_1&1\end{pmatrix}}&&X_2\ar[dd]\\
&&&&Y_2\ar[dll]^{x_2 g'}\ar@{.>}[dllll]_{g'}\\
X_2\ar[rr]_{x_2}&&X_3\\
}$$
This shows that $P^{\bullet}_0\overset{\alpha ^\bullet}\rightarrow X^{\bullet}$ is a $\mathcal{S}(\mathcal{M})$-precover. Dually, we can show that $X^{\bullet} \overset{\beta  ^\bullet}\rightarrow Q^{\bullet}_0$ is a $\mathcal{S}(\mathcal{M})$-preenvelope. Therefore, $\mathcal{S}(\mathcal{M})$ is a strongly functorially finite subcategory of $(\mathcal{E}(\mathcal{M}),\mathcal{E})$. We have already proven the following.

\begin{prop}\label{S(M) pre}
Let $(\mathcal{M},\mathcal{E})$ be an exact category. Then $\mathcal{S}(\mathcal{M})$ is a pseudo-cluster tilting subcategory in $(\mathcal{E(M)},\mathcal{E})$.
\end{prop}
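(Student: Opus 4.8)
The plan is to verify directly that $\mathcal{S}(\mathcal{M})$ satisfies both of the approximation conditions $\blacktriangle$ and $\blacktriangledown$ of Section~\ref{section 3} inside $(\mathcal{E}(\mathcal{M}),\mathcal{E})$; since, as noted just after those conditions are introduced, a subcategory fulfilling both is pseudo-cluster tilting, this is exactly what is required. Concretely, for each object $X^\bullet$ I must produce a conflation $0\to P_1^\bullet\to P_0^\bullet\overset{\alpha^\bullet}\to X^\bullet\to 0$ in $(\mathcal{E}(\mathcal{M}),\mathcal{E})$ with $P_0^\bullet,P_1^\bullet\in\mathcal{S}(\mathcal{M})$ and $\alpha^\bullet$ an $\mathcal{S}(\mathcal{M})$-precover, together with the dual conflation $0\to X^\bullet\overset{\beta^\bullet}\to Q_0^\bullet\to Q_1^\bullet\to 0$ with $\beta^\bullet$ an $\mathcal{S}(\mathcal{M})$-preenvelope.

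First I would write down the resolution explicitly. Given $X^\bullet\colon X_1\overset{x_1}\to X_2\overset{x_2}\to X_3$, the natural candidate for $P_0^\bullet$ is the split conflation with middle term $X_1\oplus X_2$ and differentials $\begin{pmatrix}1\\-x_1\end{pmatrix}$ and $\begin{pmatrix}x_1 & 1\end{pmatrix}$, while $P_1^\bullet$ is the split conflation with terms $0,X_1,X_1$ in degrees $-1,0,1$. These assemble into a $3\times 3$ diagram whose three rows are conflations and whose columns are degreewise split short exact sequences in $\mathcal{M}$. This simultaneously exhibits $0\to P_1^\bullet\to P_0^\bullet\to X^\bullet\to 0$ as a conflation—indeed it lies in the finer structure $(\mathcal{E}(\mathcal{M}),\mathcal{E}_0^{-1})$, hence a fortiori in $(\mathcal{E}(\mathcal{M}),\mathcal{E})$—and presents $\alpha^\bullet$ as a degreewise deflation. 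The preenvelope side is entirely dual, using the split conflation with middle term $X_2\oplus X_3$.

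The main step, and the one requiring care, is checking that $\alpha^\bullet$ is genuinely an $\mathcal{S}(\mathcal{M})$-precover. Here I would first normalise: up to isomorphism any $Y^\bullet\in\mathcal{S}(\mathcal{M})$ has the canonical form $Y_1\overset{i}\to Y_1\oplus Y_2\overset{p}\to Y_2$ with $i,p$ the canonical injection and projection. The key observation is that the chain-map conditions force an arbitrary $g^\bullet\colon Y^\bullet\to X^\bullet$ to be determined by just two free components $g_1\colon Y_1\to X_1$ and $g'\colon Y_2\to X_2$, its middle and right components then being $\begin{pmatrix}x_1 g_1 & g'\end{pmatrix}$ and $x_2 g'$. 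One then writes down the lift $h^\bullet\colon Y^\bullet\to P_0^\bullet$ with components $g_1$, the block-diagonal map $\begin{pmatrix}g_1 & 0\\ 0 & g'\end{pmatrix}$, and $g'$, and verifies that it is a chain map with $\alpha^\bullet h^\bullet=g^\bullet$.

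I expect the only real obstacle to be this factorization: a general morphism of conflations need \emph{not} factor through the chosen resolution, and it is precisely the splitness of the source $Y^\bullet$ that straightens the middle component into block-diagonal form and makes the explicit lift available. Dualising yields the preenvelope property for $\beta^\bullet$. Together these show that $\mathcal{S}(\mathcal{M})$ is strongly functorially finite in $(\mathcal{E}(\mathcal{M}),\mathcal{E})$ and that the two special resolutions exist, which is conditions $\blacktriangle$ and $\blacktriangledown$ and hence completes the proof.
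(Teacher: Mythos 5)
Your proposal is correct and is essentially the paper's own proof: the paper uses exactly the same explicit split resolutions, namely $P_0^\bullet$ with middle term $X_1\oplus X_2$ and differentials $\begin{pmatrix}1\\-x_1\end{pmatrix}$, $\begin{pmatrix}x_1&1\end{pmatrix}$, and $P_1^\bullet$ with terms $0,X_1,X_1$ (giving a conflation in $(\mathcal{E}(\mathcal{M}),\mathcal{E}_0^{-1})$), together with the dual construction with middle term $X_2\oplus X_3$ for the preenvelope. Your key step is also the paper's: after normalizing $Y^\bullet\in\mathcal{S}(\mathcal{M})$ to canonical split form, a morphism $g^\bullet:Y^\bullet\to X^\bullet$ is determined by the two free components $g_1$ and $g'$, with middle component $\begin{pmatrix}x_1g_1&g'\end{pmatrix}$ and right component $x_2g'$, and the block-diagonal lift $\begin{pmatrix}g_1&0\\0&g'\end{pmatrix}$ exhibits $\alpha^\bullet$ as an $\mathcal{S}(\mathcal{M})$-precover, which is precisely the factorization displayed in the paper.
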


\begin{lem}\label{char E(M)}
Let	$(\mathcal{E}(\mathcal{M}),\mathcal{E})$ and $\mathcal{S}(\mathcal{M})$ be as above. Let $S:0\rightarrow X^{\bullet}\stackrel{f^{\bullet}}{\longrightarrow} Y^{\bullet}\stackrel{g^{\bullet}}{\longrightarrow}Z^{\bullet}\rightarrow 0$ be a conflation in $(\mathcal{E}(\mathcal{M}),\mathcal{E})$.

\begin{enumerate}
\item[(1)] $S$ is $\Hom_{\mathcal{E}(\mathcal{M})}(\mathcal{S}(\mathcal{M}),-)$-exact if and only if it belongs to $(\mathcal{E}(\mathcal{M}),\mathcal{E}_{0}^{-1})$;
\item[(2)] $S$ is $\Hom_{\mathcal{E}(\mathcal{M})}(-,\mathcal{S}(\mathcal{M}))$-exact if and only if it belongs to $(\mathcal{E}(\mathcal{M}),\mathcal{E}_{0}^{1})$.
\end{enumerate}
\end{lem}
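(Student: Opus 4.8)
The plan is to reduce both statements to a single computation of the Hom-groups out of (respectively, into) an object of $\mathcal{S}(\mathcal{M})$, which shows that these Hom-functors only detect two of the three degrees of a conflation.

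For (1), I would first fix $W^{\bullet}\in \mathcal{S}(\mathcal{M})$, which up to isomorphism is $0\to W_1\stackrel{i}{\to} W_1\oplus W_2\stackrel{p}{\to} W_2\to 0$ with $i,p$ the canonical injection and projection. Writing a chain map $h^{\bullet}:W^{\bullet}\to X^{\bullet}$ as $(h_1,h_2,h_3)$ and imposing the commutativity relations $x_1h_1=h_2i$ and $x_2h_2=h_3p$, one sees that $h^{\bullet}$ is freely determined by $h_1:W_1\to X_1$ together with the $W_2$-component of $h_2:W_1\oplus W_2\to X_2$, while $h_3$ and the $W_1$-component of $h_2$ are forced. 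This produces a natural isomorphism
$$\Hom_{\mathcal{E}(\mathcal{M})}(W^{\bullet},X^{\bullet})\cong \Hom_{\mathcal{M}}(W_1,X_1)\oplus \Hom_{\mathcal{M}}(W_2,X_2),$$
natural in $X^{\bullet}$. Setting up this isomorphism and verifying its naturality in $X^{\bullet}$ is the technical heart of the argument, and the step I expect to demand the most care.

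Granting the isomorphism, applying $\Hom_{\mathcal{E}(\mathcal{M})}(W^{\bullet},-)$ to $S$ splits the associated Hom-sequence into the direct sum of the degree $-1$ sequence built from $\Hom_{\mathcal{M}}(W_1,-)$ and the degree $0$ sequence built from $\Hom_{\mathcal{M}}(W_2,-)$. Since $W^{\bullet}$ ranges over all of $\mathcal{S}(\mathcal{M})$, the pair $(W_1,W_2)$ is arbitrary; taking $W_2=0$ isolates the degree $-1$ condition and $W_1=0$ isolates the degree $0$ condition, while both together recover exactness for every $W^{\bullet}$. Hence $S$ is $\Hom_{\mathcal{E}(\mathcal{M})}(\mathcal{S}(\mathcal{M}),-)$-exact if and only if the degree $-1$ and degree $0$ conflations of $S$ are $\Hom_{\mathcal{M}}(\mathcal{M},-)$-exact.

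To finish I would use the elementary fact that a conflation $0\to A\to B\to C\to 0$ in $\mathcal{M}$ is $\Hom_{\mathcal{M}}(\mathcal{M},-)$-exact if and only if it splits, the nontrivial direction following by testing with $M=C$ and lifting $\id_C$ to a section. Therefore $\Hom_{\mathcal{E}(\mathcal{M})}(\mathcal{S}(\mathcal{M}),-)$-exactness of $S$ is equivalent to splitting in degrees $-1$ and $0$, that is, to $S\in(\mathcal{E}(\mathcal{M}),\mathcal{E}_0^{-1})$, which proves (1). Part (2) is completely dual: the analogous computation gives $\Hom_{\mathcal{E}(\mathcal{M})}(X^{\bullet},W^{\bullet})\cong \Hom_{\mathcal{M}}(X_2,W_1)\oplus \Hom_{\mathcal{M}}(X_3,W_2)$, so $\Hom_{\mathcal{E}(\mathcal{M})}(-,\mathcal{S}(\mathcal{M}))$-exactness detects splitting in degrees $0$ and $1$, which is precisely membership in $(\mathcal{E}(\mathcal{M}),\mathcal{E}_0^{1})$.
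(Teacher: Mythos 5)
Your proposal is correct, and it reaches the lemma by a genuinely different route than the paper. The paper proves the two implications of (1) by separate diagram arguments: for necessity it tests $S$ against the canonical conflation $0\rightarrow P^{\bullet}_1\rightarrow P^{\bullet}_0\stackrel{r^{\bullet}}{\longrightarrow}Z^{\bullet}\rightarrow 0$ in $(\mathcal{E}(\mathcal{M}),\mathcal{E}_{0}^{-1})$ constructed in Proposition \ref{S(M) pre}, noting that a factorization $r^{\bullet}=g^{\bullet}u^{\bullet}$ forces $g_1$ and $g_2$ to be split epimorphisms because the degree $-1$ and degree $0$ components of $r^{\bullet}$ are; for sufficiency it writes down, for an arbitrary $h^{\bullet}:P^{\bullet}\rightarrow Z^{\bullet}$ with $P^{\bullet}$ of standard split form, the explicit lift $u_1=s_1h_1$, $u_2=\begin{pmatrix}y_1s_1h_1&s_2b\end{pmatrix}$, $u_3=y_2s_2b$ built from chosen sections $s_1,s_2$ of $g_1,g_2$. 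You instead establish once the natural isomorphism $\Hom_{\mathcal{E}(\mathcal{M})}(W^{\bullet},X^{\bullet})\cong\Hom_{\mathcal{M}}(W_1,X_1)\oplus\Hom_{\mathcal{M}}(W_2,X_2)$, and your computation checks out: the $W_1$-component of $h_2$ is forced to be $x_1h_1$ and $h_3=x_2b$ is forced, while composition with $\phi^{\bullet}:X^{\bullet}\rightarrow X'^{\bullet}$ sends $(h_1,b)$ to $(\phi_1h_1,\phi_2b)$, so naturality is immediate; equivalently, $W^{\bullet}$ decomposes in $\mathcal{E}(\mathcal{M})$ as the direct sum of the contractible conflations $(W_1\stackrel{1}{\rightarrow}W_1\rightarrow 0)$ and $(0\rightarrow W_2\stackrel{1}{\rightarrow}W_2)$, which corepresent the degree $-1$ and degree $0$ component functors. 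Both implications then drop out of the standard fact that a conflation in $\mathcal{M}$ is $\Hom_{\mathcal{M}}(\mathcal{M},-)$-exact if and only if it splits, and part (2) dualizes mechanically. What your route buys: a single functor identification yields both directions at once and makes transparent why these Hom-functors detect only two of the three degrees; moreover your direct-sum decomposition handles the full Hom-sequence, whereas the paper tacitly uses that exactness need only be checked at the right-hand end (left exactness being automatic for conflations). What the paper's route buys: it avoids setting up and verifying the natural isomorphism, and it reuses the precover/preenvelope conflations of Proposition \ref{S(M) pre} that the paper needs later anyway.
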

\begin{proof}We only prove (1), and (2) is the dual. Suppose that $S$ is $\Hom_{\mathcal{E}(\mathcal{M})}(\mathcal{S}(\mathcal{M}),-)$-exact. We take a conflation $0\rightarrow P^{\bullet}_1\rightarrow P^{\bullet}_0 \overset{r^\bullet}\to Z^{\bullet}\to 0$ in $(\mathcal{E}(\mathcal{M}),\mathcal{E}_{0}^{-1})$ with $P^{\bullet}_0,P^{\bullet}_1\in \mathcal{S}(\mathcal{M})$ by Proposition \ref{S(M) pre}. Then $r^{\bullet}$ factors through $g^{\bullet}$. This means $S$ splits in degree $-1$ and 0, as a consequence, it belongs to $(\mathcal{E}(\mathcal{M}),\mathcal{E}_{0}^{-1})$.

Conversely, assume that $S$ belongs to $(\mathcal{E}(\mathcal{M}),\mathcal{E}_{0}^{-1})$. Let $P^{\bullet}$ be an arbitrary object in $\mathcal{S}(\mathcal{M})$. Then up to isomorphisms, we may assume that $P^{\bullet}$ is of the form $0\rightarrow P_1\stackrel{i} \to P_1\oplus{P_2}\stackrel{p} \to P_2\rightarrow 0$ with $i$ and $p$ the canonical injection and projection respectively. Let $h^{\bullet}:P^{\bullet}\rightarrow Z^{\bullet}$ be arbitrary. Then we have the following diagram.

{\tiny $$\xymatrix{
	0\ar[rr]&&X_1\ar[rr]^{f_1}\ar[dd]&&Y_1\ar[dd]^{y_1}\ar[rr]^{g_1}&&Z_1\ar[rr]\ar[dd]^{z_1}&&0\\
	&&&&&&&&&&P_1\ar[dd]^{i}\ar@{.>}[ullllll]^{u_1}\ar[ullll]_{h_1}\\
	0\ar[rr]&&X_2\ar[dd]\ar[rr]^{f_2}&&Y_2\ar[dd]^{y_2}\ar[rr]^{g_2}&&Z_2\ar[dd]^{z_2}\ar[rr]&&0\\ &&&&&&&&&&P_1\oplus{P_2}\ar[dd]^{p}\ar[ullll]_{\begin{pmatrix}z_{1}h_{1}&b\end{pmatrix}}\ar@{.>}[ullllll]^{u_2}\\
	0\ar[rr]&&X_3\ar[rr]^{f_3}&&Y_3\ar[rr]^{g_3}&&Z_3\ar[rr]&&0\\
	&&&&&&&&&&P_2\ar[ullll]_{z_2b}\ar@{.>}[ullllll]^{u_3}\\	
}$$}

There are two morphisms $s_1:Z_1\rightarrow Y_1$ and $s_2: Z_2\rightarrow Y_2$, such that $g_1 s_1=1$ and $g_2 s_2=1$, respectively. Let $u_1 = s_1 h_1$, $u_2=\begin{pmatrix}y_1 s_1 h_1&s_2b\end{pmatrix}$ and $u_3 = y_2s_2b$. It is easy to check that $u^{\bullet}:P^{\bullet}\rightarrow Y^{\bullet}$ is a morphism such that $h^{\bullet}$ factors through $g^{\bullet}$. Therefore, the conflation $S$ is $\Hom_{\mathcal{E}(\mathcal{M})}(\mathcal{S}(\mathcal{M}),-)$-exact.
\end{proof}

\begin{lem}\label{min con s}
Let	$\mathcal{S}$ and $\mathcal{T}$ be as in Section \ref{section 3}. Then we have
\begin{enumerate}
\item[(1)] $(\mathcal{E}(\mathcal{M}),\mathcal{E}_0)$ is the minimum exact substructure in $(\mathcal{E}(\mathcal{M}),\mathcal{E})$ that contains $\mathcal{S}$;
\item[(2)] $(\mathcal{E}(\mathcal{M}),\mathcal{E}_0)$ is the minimum exact substructure in $(\mathcal{E}(\mathcal{M}),\mathcal{E})$ that contains $\mathcal{T}$.
\end{enumerate}
\end{lem}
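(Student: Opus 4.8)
The plan is to prove two things, since ``$(\mathcal{E}(\mathcal{M}),\mathcal{E}_0)$ is the minimum exact substructure containing $\mathcal{S}$'' means: $(\mathcal{E}(\mathcal{M}),\mathcal{E}_0)$ is an exact substructure (already recorded above), it contains $\mathcal{S}$, and it sits inside every exact substructure that contains $\mathcal{S}$. I will carry out (1) and only indicate (2), which is formally dual: one replaces $\mathcal{S}$ by $\mathcal{T}$, the deflation-factorisation by the inflation-factorisation defining $\mathcal{T}$, and interchanges the roles of $\mathcal{E}_0^{-1}$ and $\mathcal{E}_0^{1}$. Throughout I take $\mathcal{P}=\mathcal{S}(\mathcal{M})$, so that by Lemma \ref{char E(M)} the two $\Hom$-exactness conditions appearing in the definitions of $\mathcal{S}$ and $\mathcal{T}$ become ``splitting in degrees $-1,0$'' (membership in $\mathcal{E}_0^{-1}$) and ``splitting in degrees $0,1$'' (membership in $\mathcal{E}_0^{1}$).

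First I would show $\mathcal{S}\subseteq\mathcal{E}_0$. Given $S\in\mathcal{S}$, place it inside its defining $3\times 3$ diagram, whose second column lies in $\mathcal{E}_0^{-1}$ and whose bottom row lies in $\mathcal{E}_0^{1}$. Restricting the entire diagram to degree $0$ produces a $3\times 3$ diagram of conflations in $\mathcal{M}$ in which the middle column and the bottom row split. Choosing a section $\sigma$ of the split column-deflation and a section $\tau$ of the split row-deflation, the composite $\sigma\tau$ is a section of the degree-$0$ deflation of $S$, because that deflation factors (by commutativity of the diagram) as column-deflation followed by row-deflation. Hence $S$ splits in degree $0$, i.e. $S\in\mathcal{E}_0$.

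For minimality, let $\mathcal{F}$ be any exact substructure of $(\mathcal{E}(\mathcal{M}),\mathcal{E})$ with $\mathcal{S}\subseteq\mathcal{F}$; I must show $\mathcal{E}_0\subseteq\mathcal{F}$. By Remark \ref{rmk S and T}(1) combined with Lemma \ref{char E(M)} one has $\mathcal{E}_0^{-1}\cup\mathcal{E}_0^{1}\subseteq\mathcal{S}\subseteq\mathcal{F}$. The tempting shortcut is that every $S\in\mathcal{E}_0$ already lies in $\mathcal{S}$, i.e. its deflation factors as an $\mathcal{E}_0^{-1}$-deflation followed by an $\mathcal{E}_0^{1}$-deflation (this is exactly the content of the $\mathcal{S}$-diagram). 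I expect this to be the hard part, and in fact it fails in general: constructing the required intermediate object forces a direct-summand splitting of the end terms that a genuinely non-split conflation does not supply, so $\mathcal{S}\subsetneq\mathcal{E}_0$ in general. The remedy is a reduction-and-descent argument using that $\mathcal{F}$ is closed under isomorphisms, composition of inflations, pushout of its inflations along arbitrary maps, and pullback of its conflations.

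The reduction handles the favourable case: if $S\in\mathcal{E}_0$ has its left term in $\mathcal{S}(\mathcal{M})$, say $X^{\bullet}=(A\to A\oplus B\to B)$, then the summand $B$ provides exactly the missing decomposition, and one writes down explicitly the admissible subobject of $X^{\bullet}$ supported in degrees $0,1$ corresponding to $B$; its associated first deflation lies in $\mathcal{E}_0^{-1}$ and the quotient deflation lies in $\mathcal{E}_0^{1}$, so $S\in\mathcal{S}$. For a general $S\colon X^{\bullet}\to Y^{\bullet}\to Z^{\bullet}$ in $\mathcal{E}_0$, take the $\mathcal{E}_0^{1}$-coresolution $\beta\colon X^{\bullet}\to Q_0^{\bullet}$ with $Q_0^{\bullet}\in\mathcal{S}(\mathcal{M})$ from Proposition \ref{S(M) pre}; it is an $\mathcal{F}$-inflation. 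The pushout $\beta_{*}S$ stays in $\mathcal{E}_0$ and now has split left term, hence $\beta_{*}S\in\mathcal{S}\subseteq\mathcal{F}$. For the descent, write $\iota\colon Q_0^{\bullet}\to Y'^{\bullet}$ for the inflation of $\beta_{*}S$ and $f'\colon Y^{\bullet}\to Y'^{\bullet}$ for the pushout of $\beta$, so that $\chi:=f'f=\iota\beta$ is a composite of $\mathcal{F}$-inflations and hence an $\mathcal{F}$-inflation; thus the conflation $X_c\colon X^{\bullet}\to Y'^{\bullet}\to \Coker\chi$ lies in $\mathcal{F}$. The Noether isomorphism applied to $X^{\bullet}\xrightarrow{f}Y^{\bullet}\xrightarrow{f'}Y'^{\bullet}$ yields a conflation $Z^{\bullet}\to\Coker\chi\to Q_1^{\bullet}$, and a direct check ($Y'^{\bullet}\times_{\Coker\chi}Z^{\bullet}\cong Y^{\bullet}$) identifies the pullback of $X_c$ along the inflation $Z^{\bullet}\to\Coker\chi$ with $S$. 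Since $\mathcal{F}$ is closed under pullback of its conflations, $S\in\mathcal{F}$, giving $\mathcal{E}_0\subseteq\mathcal{F}$. Assertion (2) then follows by the dual reduction/descent, using $\mathcal{T}$, the $\mathcal{E}_0^{-1}$-resolution of $Z^{\bullet}$, a pullback to reach split right term, and a pushout to descend.
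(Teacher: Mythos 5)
Your proof is correct and follows the same skeleton as the paper's. The containment $\mathcal{S}\subseteq\mathcal{E}_0$ via the factorisation of the degree-$0$ deflation through the split column- and row-deflations is the paper's argument verbatim, and your reduction step --- push $S$ out along the preenvelope conflation $X^{\bullet}\to Q_0^{\bullet}\to Q_1^{\bullet}$ of Proposition \ref{S(M) pre} to reach a conflation with split left term --- is exactly the paper's $3\times 3$ pushout diagram (your $Y'^{\bullet}$ is the paper's $C^{\bullet}$, your $\iota$ and $f'$ its $s^{\bullet}$ and $t^{\bullet}$). You diverge at two points, both harmless. First, for the pushed-out conflation with left term $Q_0^{\bullet}\in\mathcal{S}(\mathcal{M})$, the paper proves the slightly stronger fact that it lies in $\mathcal{E}_0^{-1}$ (degree $-1$ splits in one line: the composite $(Q_0^{\bullet})_{-1}\to C_{-1}\to C_0$ equals the split monomorphism $(Q_0^{\bullet})_{-1}\to (Q_0^{\bullet})_0\to C_0$, so $(Q_0^{\bullet})_{-1}\to C_{-1}$ is a split monomorphism), whereas you place it only in $\mathcal{S}$ by exhibiting the defining diagram from the subobject $U^{\bullet}=(0\to B\to B)$; your version works but costs a $3\times 3$-lemma verification that the quotient column is again a conflation, which the paper's observation avoids. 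Second, for the descent the paper simply notes that $s^{\bullet}r^{\bullet}=t^{\bullet}f^{\bullet}$ is an $\mathcal{E}'$-inflation and cites B\"uhler's Proposition 2.16 (cancellation of inflations, the ``obscure axiom'') to conclude that $f^{\bullet}$ is an $\mathcal{E}'$-inflation; your Noether-plus-pullback argument --- recovering $S$ as the pullback of $X^{\bullet}\to Y'^{\bullet}\to\Coker\chi$ along $Z^{\bullet}\to\Coker\chi$, which is justified by the standard fact that in a morphism of conflations with identical kernel term the right-hand square is bicartesian --- is in effect an inline proof of that cancellation in this special case. So your route is self-contained where the paper's is a citation, at the price of two extra standard verifications; otherwise the two proofs coincide, including your correct warning that $\mathcal{S}\subsetneq\mathcal{E}_0$ in general (so one cannot shortcut the minimality step) and the appeal to duality for assertion (2).
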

\begin{proof}We only prove (1), and (2) is the dual. It can be easily checked that $\mathcal{S}\subseteq (\mathcal{E}(\mathcal{M}),\mathcal{E}_0)$. In fact, for a given conflation $0\rightarrow S_1^\bullet\rightarrow S_2^\bullet \rightarrow S_3^\bullet \rightarrow 0 \in \mathcal{S}$, there is the following commutative diagram, where the second column is $\Hom_{\mathcal{E}(\mathcal{M})}(\mathcal{S}(\mathcal{M}),-)$-exact and the third row is $\Hom_{\mathcal{E}(\mathcal{M})}(-,\mathcal{S}(\mathcal{M}))$-exact.

$$\xymatrix{
 	&&0\ar[d]&&0\ar[d]\\
 	&&U^\bullet\ar[d]\ar@{=}[rr]&&U^\bullet\ar[d]\\
 	0\ar[rr]&&S_1^\bullet\ar[d]\ar[rr]&&S_2^\bullet\ar[d]\ar[rr]&&S_3^\bullet\ar@{=}[d]\ar[rr]&&0\\
 	0\ar[rr]&&M_1^\bullet\ar[d]\ar[rr]&&M_2^\bullet\ar[d]\ar[rr]&&S_3^\bullet\ar[rr]&&0\\
 	&&0&&0
 }$$
It implies that the second column is in $(\mathcal{E}(\mathcal{M}),\mathcal{E}_{0}^{-1})$ and the third row is in $(\mathcal{E}(\mathcal{M}),\mathcal{E}_{0}^{1})$ by Lemma \ref{char E(M)}. Then it is trivial that $0\rightarrow S_1^\bullet\rightarrow S_2^\bullet\rightarrow S_3^\bullet\rightarrow 0 \in \mathcal{S}$ is in $(\mathcal{E}(\mathcal{M}),\mathcal{E}_{0})$.

Now suppose that $(\mathcal{E}(\mathcal{M}),\mathcal{E}')\preceq (\mathcal{E}(\mathcal{M}),\mathcal{E})$ is an exact substructure that contains
$\mathcal{S}$. Let $0\rightarrow X^{\bullet}\rightarrow Y^{\bullet} \rightarrow Z^{\bullet}\rightarrow 0$ be a conflation in $(\mathcal{E}(\mathcal{M}),\mathcal{E}_{0})$. Then there is a conflation $0\rightarrow X^{\bullet}\rightarrow Q_0^{\bullet} \rightarrow Q_1^{\bullet}\rightarrow 0$ in $(\mathcal{E}(\mathcal{M}),\mathcal{E}_{0}^{1})\preceq (\mathcal{E}(\mathcal{M}),\mathcal{E}')$ by Proposition \ref{S(M) pre}. Consider the following commutative diagram

$$\xymatrix{
 	&&0\ar[d]&&0\ar[d]\\
 	0\ar[rr]&&X^{\bullet}\ar[d]^{f^{\bullet}}\ar[rr]^{r^{\bullet}}&&Q_0^{\bullet}\ar[d]^{s^{\bullet}}\ar[rr]&&Q_1^{\bullet}\ar@{=}[d]\ar[rr]&&0\\
 	0\ar[rr]&&Y^{\bullet}\ar[d]\ar[rr]^{t^{\bullet}}&&C^{\bullet}\ar[d]\ar[rr]&&Q_1^{\bullet}\ar[rr]&&0\\
 	&&Z^{\bullet}\ar@{=}[rr]\ar[d]&&Z^{\bullet}\ar[d]\\
    &&0&&0
 }$$
We have that $0\rightarrow Y^{\bullet}\rightarrow C^{\bullet} \rightarrow Q_1^{\bullet}\rightarrow 0$ is in $(\mathcal{E}(\mathcal{M}),\mathcal{E}_{0}^{1})$. Hence $0\rightarrow Q_0^{\bullet}\rightarrow C^{\bullet} \rightarrow Z^{\bullet}\rightarrow 0$ is in $(\mathcal{E}(\mathcal{M}),\mathcal{E}_{0})$. Since $Q_0^{\bullet}$ is in $\mathcal{S}(\mathcal{M})$, we have that $0\rightarrow Q_0^{\bullet}\rightarrow C^{\bullet} \rightarrow Z^{\bullet}\rightarrow 0$ is in $(\mathcal{E}(\mathcal{M}),\mathcal{E}_{0}^{-1})\preceq (\mathcal{E}(\mathcal{M}),\mathcal{E}')$. Now $s^{\bullet}r^{\bullet}=t^{\bullet}f^{\bullet}$ is a inflation in $(\mathcal{E}(\mathcal{M}),\mathcal{E}')$, hence $f^{\bullet}$ is a inflation in $(\mathcal{E}(\mathcal{M}),\mathcal{E}')$ by \cite[Proposition 2.16]{Bu}. This means $0\rightarrow X^{\bullet}\rightarrow Y^{\bullet} \rightarrow Z^{\bullet}\rightarrow 0$ is in $(\mathcal{E}(\mathcal{M}),\mathcal{E}')$.
\end{proof}

\begin{thm}\label{cluster quotients}
Let $(\mathcal{E},\mathcal{M})$ be an exact category. Then $\mathcal{E}(\mathcal{M})/\mathcal{S}(\mathcal{M})$ is an abelian category. Moreover, $(\mathcal{E}(\mathcal{M}),\mathcal{E}_0)$ is the unique exact structure such that it is a cluster quotient.
\end{thm}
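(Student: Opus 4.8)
The plan is to deduce the abelianness from the machinery of Section~\ref{section 3} and then to pin down the exact structure by a two-sided comparison. By Proposition~\ref{S(M) pre}, $\mathcal{S}(\mathcal{M})$ is a pseudo-cluster tilting subcategory of $(\mathcal{E}(\mathcal{M}),\mathcal{E})$, so Theorem~\ref{Th semi-abel} already gives semi-abelianness, and Theorem~\ref{main Th} reduces abelianness to self-orthogonality with respect to the class $\mathcal{S}$. The heart of the matter is therefore a self-orthogonality lemma that I would isolate first: \emph{every conflation $0\to P^{\bullet}\to X^{\bullet}\to Q^{\bullet}\to 0$ in $(\mathcal{E}(\mathcal{M}),\mathcal{E})$ splitting in degree $0$ with $P^{\bullet},Q^{\bullet}\in\mathcal{S}(\mathcal{M})$ already splits in $\mathcal{E}(\mathcal{M})$.} To prove it I would identify $X_2=P_2\oplus Q_2$ via the degree-$0$ splitting, write the chain maps $f^{\bullet},g^{\bullet}$ in matrix form, and observe that the chain-map relations together with $x_2x_1=0$ force the degree $-1$ and degree $1$ conflations to split and, more importantly, to be mutually compatible: choosing any section $\sigma_1$ of the degree-$(-1)$ deflation and fixing the remaining components by the explicit formulas dictated by the matrices produces an honest morphism of complexes splitting $g^{\bullet}$, the single nontrivial identity being precisely the specialization of $x_2x_1=0$ along $\sigma_1$. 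Since $\mathcal{S}\subseteq(\mathcal{E}(\mathcal{M}),\mathcal{E}_0)$ by Lemma~\ref{min con s}, any conflation in $\mathcal{S}$ with ends in $\mathcal{S}(\mathcal{M})$ splits in degree $0$, so the lemma yields self-orthogonality with respect to $\mathcal{S}$ and hence, by Theorem~\ref{main Th}, the abelianness of $\mathcal{E}(\mathcal{M})/\mathcal{S}(\mathcal{M})$.

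For the second assertion I would first record that $(\mathcal{E}(\mathcal{M}),\mathcal{E}_0)$ really is a cluster quotient: conditions $\blacktriangle$ and $\blacktriangledown$ hold in $\mathcal{E}_0$ because the standard resolutions built before Proposition~\ref{S(M) pre} lie in $\mathcal{E}_0^{-1}\preceq\mathcal{E}_0$ and $\mathcal{E}_0^{1}\preceq\mathcal{E}_0$, so $\mathcal{S}(\mathcal{M})$ is strongly functorially finite there, while the self-orthogonality lemma gives condition $(2')$ with respect to all of $\mathcal{E}_0$. Thus $\mathcal{S}(\mathcal{M})$ is cluster tilting in $(\mathcal{E}(\mathcal{M}),\mathcal{E}_0)$ and its quotient is the abelian cluster quotient. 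It remains to show that if $\mathcal{S}(\mathcal{M})$ is cluster tilting in some exact substructure $(\mathcal{E}(\mathcal{M}),\mathcal{E}')\preceq(\mathcal{E}(\mathcal{M}),\mathcal{E})$ then $\mathcal{E}'=\mathcal{E}_0$, which I would prove by two inclusions.

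For $\mathcal{E}_0\subseteq\mathcal{E}'$ I would use only the pseudo-cluster tilting part. Given a $\Hom_{\mathcal{E}(\mathcal{M})}(-,\mathcal{S}(\mathcal{M}))$-exact conflation $0\to X^{\bullet}\stackrel{a^{\bullet}}{\to}Y^{\bullet}\to Z^{\bullet}\to 0$, an $\mathcal{E}'$-preenvelope $\iota^{\bullet}:X^{\bullet}\to Q_0^{\bullet}$ (an $\mathcal{E}'$-inflation, by strong covariant finiteness) factors as $\iota^{\bullet}=\phi^{\bullet}a^{\bullet}$, and since $\iota^{\bullet}$ is an $\mathcal{E}'$-inflation, \cite[Proposition 2.16]{Bu} forces $a^{\bullet}$ to be one, so the conflation lies in $\mathcal{E}'$. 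By Lemma~\ref{char E(M)} this shows $\mathcal{E}_0^{1}\subseteq\mathcal{E}'$, and dually (via precovers) $\mathcal{E}_0^{-1}\subseteq\mathcal{E}'$; reading off the defining diagram of $\mathcal{S}$, the deflation of any conflation in $\mathcal{S}$ is a composite of an $\mathcal{E}_0^{-1}$-deflation with an $\mathcal{E}_0^{1}$-deflation, hence an $\mathcal{E}'$-deflation, so $\mathcal{S}\subseteq\mathcal{E}'$ and Lemma~\ref{min con s} gives $\mathcal{E}_0\subseteq\mathcal{E}'$. For the reverse inclusion $\mathcal{E}'\subseteq\mathcal{E}_0$ I would use self-orthogonality: from an arbitrary $\mathcal{E}'$-conflation $\eta$, push out along an $\mathcal{E}'$-preenvelope and pull back along an $\mathcal{E}'$-precover to obtain an $\mathcal{E}'$-conflation $\xi$ with both ends in $\mathcal{S}(\mathcal{M})$; condition $(2')$ splits $\xi$, and since every $\mathcal{S}(\mathcal{M})$-preenvelope (resp. precover) is a split monomorphism (resp. epimorphism) in degree $0$ — because the standard ones are and preenvelopes factor through one another — the degree-$0$ functor carries the splitting of $\xi$ back to a splitting of $\eta$ in degree $0$. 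Thus $\eta\in\mathcal{E}_0$, completing the uniqueness.

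The step I expect to be most delicate is the self-orthogonality lemma, specifically checking that the componentwise sections assemble into a genuine morphism of complexes; this is exactly where the conflation condition $x_2x_1=0$ must be inserted, and it is also the engine behind the existence of the cluster structure and, through the pushout--pullback reduction, the inclusion $\mathcal{E}'\subseteq\mathcal{E}_0$. A secondary subtlety is the appeal to \cite[Proposition 2.16]{Bu} to convert ``$\iota^{\bullet}=\phi^{\bullet}a^{\bullet}$ is an inflation'' into ``$a^{\bullet}$ is an inflation,'' which must be verified to apply in $(\mathcal{E}(\mathcal{M}),\mathcal{E}')$ rather than merely in $(\mathcal{E}(\mathcal{M}),\mathcal{E})$.
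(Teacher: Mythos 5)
Your proposal is correct, and its overall skeleton is the paper's: abelianness via Proposition~\ref{S(M) pre}, the splitting of conflations that split in degree $0$ and have both ends in $\mathcal{S}(\mathcal{M})$, the inclusion $\mathcal{S}\subseteq\mathcal{E}_0$ from Lemma~\ref{min con s}, and then Theorem~\ref{main Th}; uniqueness via the pushout--pullback reduction to an $\mathcal{E}'$-conflation with both ends in $\mathcal{S}(\mathcal{M})$. There are two places where you go beyond what the paper writes, and both are worth keeping. First, the splitting lemma, which the paper dismisses with ``this is clear'': your matrix verification is right, and the one identity that is not automatic is exactly $x_2x_1=0$ precomposed with a section of the degree-$(-1)$ deflation (any section works, since the free entries of the degree-$0$ section absorb the remaining components); this is the engine of the whole theorem, so spelling it out is appropriate. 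Second, and more substantively, the inclusion $\mathcal{E}_0\preceq\mathcal{E}'$: the paper attributes this direction wholesale to Lemma~\ref{min con s}, but that lemma's minimality statement presupposes $\mathcal{S}\subseteq\mathcal{E}'$, which the paper never verifies for an arbitrary exact substructure in which $\mathcal{S}(\mathcal{M})$ is cluster tilting. Your argument supplies the missing bridge: an $\mathcal{S}(\mathcal{M})$-preenvelope that is an $\mathcal{E}'$-inflation factors through any $\Hom_{\mathcal{E}(\mathcal{M})}(-,\mathcal{S}(\mathcal{M}))$-exact conflation, and \cite[Proposition 2.16]{Bu} applied in $(\mathcal{E}(\mathcal{M}),\mathcal{E}')$ is legitimate because the inflation in question already has a cokernel in $\mathcal{E}(\mathcal{M})$; with Lemma~\ref{char E(M)} this gives $\mathcal{E}_0^{1}\subseteq\mathcal{E}'$, dually $\mathcal{E}_0^{-1}\subseteq\mathcal{E}'$, and then either your composite-of-deflations reading of the diagram defining $\mathcal{S}$ or a direct rerun of the proof of Lemma~\ref{min con s} (which only ever uses $\mathcal{E}_0^{\pm1}\subseteq\mathcal{E}'$) yields $\mathcal{E}_0\preceq\mathcal{E}'$. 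For the reverse inclusion, your direct transfer of the splitting of $\xi$ back to $\eta$ is the contrapositive of the paper's contradiction argument, and your explicit observation that $\mathcal{S}(\mathcal{M})$-preenvelopes (resp.\ precovers) are split monomorphisms (resp.\ epimorphisms) in degree $0$ --- via factoring through the standard ones --- is precisely what the paper's unproved assertion that the pushed-out conflation ``is not splitting in degree 0'' silently relies on. In short, your writeup matches the paper's route and, in the uniqueness half, closes its one genuinely soft spot.
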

\begin{proof}We first show that any conflation $0\rightarrow P^{\bullet}\rightarrow Y^{\bullet} \rightarrow Q^{\bullet}\rightarrow 0$ in $(\mathcal{E}(\mathcal{M}),\mathcal{E}_0)$ with $P^{\bullet},\ Q^{\bullet}$ in $\mathcal{S}(\mathcal{M})$ splits. This is clear, thus $\mathcal{S}(\mathcal{M})$ is a cluster tilting subcategory of $(\mathcal{E(M)},\mathcal{E}_0)$.

Suppose that $\mathcal{S(M)}$ is self-orthogonal with respect to some exact substructure $(\mathcal{E}(\mathcal{M}),\mathcal{E}')\preceq (\mathcal{E}(\mathcal{M}),\mathcal{E})$. If $(\mathcal{E}(\mathcal{M}),\mathcal{E}')\npreceq (\mathcal{E}(\mathcal{M}),\mathcal{E}_0)$, then there exists some conflation $0\rightarrow X^{\bullet}\rightarrow Y^{\bullet} \rightarrow Z^{\bullet}\rightarrow 0$ in $(\mathcal{E}(\mathcal{M}),\mathcal{E}')$ which is not splitting in degree 0. We can take a conflation $0\rightarrow X \to Q^{\bullet}_0 \rightarrow Q^{\bullet}_1\rightarrow 0$ in $(\mathcal{E}(\mathcal{M}),\mathcal{E}_{0}^{1})$ by Proposition \ref{S(M) pre}. Consider the following push-out diagram.

$$\xymatrix{
 	0\ar[rr]&&X^\bullet\ar[d]\ar[rr]&&Y^\bullet\ar[d]\ar[rr]&&Z^\bullet\ar@{=}[d]\ar[rr]&&0\\
 	0\ar[rr]&&Q_0^\bullet\ar[rr]&&M^\bullet\ar[rr]&&Z^\bullet\ar[rr]&&0\\
 }$$
Then $0\rightarrow Q_0^{\bullet}\rightarrow M^{\bullet} \rightarrow Z^{\bullet}\rightarrow 0$ is in $(\mathcal{E}(\mathcal{M}),\mathcal{E}')$ which is not splitting in degree 0. By the similar argument, we get a conflation $0\rightarrow Q_0^{\bullet}\rightarrow N^{\bullet} \rightarrow P_0^{\bullet}\rightarrow 0$ in $(\mathcal{E}(\mathcal{M}),\mathcal{E}')$ which is not splitting in degree 0. Now $P_0^{\bullet}$ and $Q_0^{\bullet} $ are in $\mathcal{S(M)}$, and $\mathcal{S(M)}$ is self-orthogonal with respect to $(\mathcal{E}(\mathcal{M}),\mathcal{E}')$, a contradiction.
Hence the conclusion holds by Theorem \ref{main Th}, Proposition \ref{S(M) pre} and Lemma \ref{min con s}.
\end{proof}

\bigskip {\bf Acknowledgements}
\bigskip

This research was partially supported by NSFC (Grant No. 11701455) and Fundamental Research Funds for the Central Universities (Grant No. 2452020182).

There are no relevant financial or non-financial competing interests to report.


\begin{thebibliography}{101}

\bibitem{Be} A. Beligiannis, {\it Rigid objects, triangulated subfactors and abelian localizations}, Math. Z. {\bf 274} (3) (2013), 841--883.

\bibitem{BMRRT} A.B. Buan, R. Marsh, M. Reineke, I. Reiten, G. Todorov, {\it Tilting theory and cluster combinatorics}, Adv. Math. {\bf 204} (2) (2006), 572--618.

\bibitem{BMR} A.B. Buan, R. Marsh, I. Reiten, {\it Cluster-tilted algebra}, Trans. Amer. Math. Soc. {\bf 359} (1) (2007), 323--332.

\bibitem{Bu} T. B$\mathrm{\ddot{u}}$hler, {\it Exact categories}, Expo. Math. {\bf 28} (1) (2010) 1--69.

\bibitem{DL} L. Demonet, Y. Liu, {\it Quotients of exact categories by cluster tilting subcategories as module categories}, J. Pure Appl. Algebra {\bf 217} (2013), 2282-2297.

\bibitem{H} A. Heller, {\it The loop-space functor in homological algebra}, Trans. Amer. Math.
Soc. {\bf 96} (1960), 382--394.

\bibitem{KZ} S. Koenig, B. Zhu, {\it From triangulated categories to abelian categories: cluster tilting in a general framework}, Math. Z. {\bf 258} (1) (2008), 143--160.

\bibitem{L} Z. Lin, {\it Abelian quotients of categories of short exact sequences}, J. Algebra {\bf 551} (2020), 61--92.

\bibitem{LZ} Y. Liu, P. Zhou, {\it Abelian Categories Arising from Cluster Tilting Subcategories}, Applied Categorical Structures {\bf 28} (2020), 575--594.

\bibitem{M} S. Mac Lane, {\it  Categories for the working mathematician}, Graduate Texts in Mathematics {\bf 5}, Springer-Verlag, 1998.

\bibitem{N1} A. Neeman, {\it Triangulated Categories}, Annals of Mathematics Studies 148, Princeton University Press (2001), viii+449.

\bibitem{N2} A. Neeman, {\it A counterexample to vanishing conjectures for negative K-theory}, Invent. Math. {\bf 225} (2021), 427--452.

\bibitem{R} W. Rump, {\it Almost abelian categories}, Cahiers Topologie G$\mathrm{\acute{e}}$om. Diff$\mathrm{\acute{e}}$rentielle Cat$\mathrm{\acute{e}}$g. {\bf 42} (3) (2001), 163--225.
\end{thebibliography}
\end{document}